\documentclass[12pt]{amsart}
\textwidth 6.5in
\textheight 9in
\topmargin 0in
\oddsidemargin 0in
\evensidemargin 0in

\usepackage{amssymb,amsmath,amscd}
\usepackage{graphicx}
\usepackage{color}
\usepackage{subfigure}
\graphicspath{{FIGURES/}}

\newtheorem{theorem}{Theorem}[section]
\newtheorem{lemma}[theorem]{Lemma}

\newtheorem{proposition}[theorem]{Proposition}

\newtheorem{algorithm}[theorem]{Algorithm}

\newcommand*{\doublebarsim}{%
\mathrel{\vcenter{\offinterlineskip\hbox{$=$}
\vskip-.10ex\hbox{$\sim$}}}}
\newcommand{\ott}[1]{\underset{\widetilde{\widetilde{}}}{#1}}
\newcommand{\ot}[1]{\underset{\widetilde{}}{#1}}




\begin{document}

\title[An Auxiliary Space Preconditioner for linear elasticity based on GFEM]
{Auxiliary space preconditioners for linear elasticity based on generalized finite element methods}

\author{James Brannick}
\address{Department of Mathematics, The Pennsylvania State
University, University Park, PA 16802, USA.}
\email{brannick@math.psu.edu}

\author{Durkbin Cho}
\address{Department of Mathematics, The Pennsylvania State
University, University Park, PA 16802, USA.}
\email{cho@math.psu.edu}

\subjclass[2000]{65F10, 65N30, 65N55}
\keywords{linear elasticity, preconditioning, auxiliary space}
\date{\today}

\maketitle

\begin{abstract}  
We construct and analyze a preconditioner of the linear elastiity
system discretized by conforming linear finite elements in the
framework of the auxiliary space method. The auxiliary space
preconditioner is based on discretization of a scalar elliptic
equation with Generalized Finite Element Method (GFEM).
 \end{abstract}

\section{Introduction}
The discretizations and fast solvers for the linear elasticity systems
have been extensive subject of research for the past 20 years. A
number of methods have been introduced
in~\cite{Arnold_Winther,Brenner_1994,Falk_1991,Franca_Stenberg,Lee_1998_2,Schoberl},
and in these works one can find stable discretizations, a priori error
estimates, as well as construction of fast and robust solvers for
different range of material parameters.

In this paper we consider the variational problem corresponding to the
lowest order finite element method discretization of linear elasticity
system in displacement formulation.  We assume that the material
parameters in the linear elasiticity system are well behaved, namely,
the Poisson ratio is away from $1/2$. We will not discuss the
robustness of our method with respect to these parameters, because our
goal here is to introduce and prove results on the relationship
between GFEM discretizations of scalar equations and the linear
elastitcity system, as well as to employ such relations in the
construction of auxiliary space~\cite{Nepomnyaschikh_1992,JXu_aux}
preconditioner. 

The preconditioner that we construct employs as auxiliary space the
piecewise quadratic conforming finite elements, and the corresponding
auxiliary bilinear form on the auxiliary space corresponds to the
discretized scalar Laplace's equation. To relate the discretization of
the scalar Laplace problem to linear elasticity, we also use an
``intermediate'' GFEM space containing piece-wise quadratic functions.
The key steps in the analysis of the preconditioner rely on the spectral
equivalence results proved in~\cite{Cho_Zikatanov2006} and the fact
that the kernel of the quadratic GFEM stiffness matrix is isomorphic
to the space of rigid modes (see~\cite{Cho_Zikatanov2006}).

The remainder of the paper is organized as follows. In section 2, we
present the linear elasticity problem of interest and its variational
formulation. In section 3, we show an auxiliary spectral equivalence
relation that plays a key role in the analysis. We introduce the
preconditioner, and prove uniform spectral bounds in section
4. Numerical results are presented in section 5 are shown to validate
the theoretical results.

\section{Linear Elasticity}

Let $\Omega\subset\mathbb{R}^2$ be a bounded convex polygonal domain with
boundary $\Gamma=\partial \Omega$. We consider
the linear elasticity
problem with pure traction boundary conditions:\\
\begin{equation}\label{eqtn:linear_elasticity}
\begin{split}
-\ot{\rm{div}}\big(2\mu\varepsilon(\ot{u})
+\lambda\mathrm{tr}(\varepsilon(\ot{u}))\ott{\delta}\big)&=\ot{f}\quad \mbox{in}\ \Omega,\\
\big(2\mu\varepsilon(\ot{u})
+\lambda\mathrm{tr}(\varepsilon(\ot{u}))\ott{\delta}\big)\ot{n}
&=\ot{g}\quad \mbox{on}\ \Gamma,
\end{split}
\end{equation}
where $\ot{f}$ is an external force, $\ot{n}$ is the
outward unit normal on the boundary,
$\varepsilon_{ij}=\frac12(\partial_j u_i +
\partial_i u_j)$ is the strain tensor, and $\ott{\delta}$ is
a matrix whose elements consist of the
Kronecker delta symbol. Further, let
{\rm{tr}} denote the trace of a matrix and define the Lam\'{e} coefficients $\mu$ and $\lambda$
in terms of the Young modulus $E$ and the Poisson ratio $\nu$ as follows:
\[  \lambda = \frac{E\nu}{(1+\nu)(1-2\nu)}\quad\mbox{and}\quad
\mu=\frac{E}{2(1+\nu)}\]
We mention that due to the corners of the boundary of the polygonal domain $\Omega$, care
must be taken when considering the boundary conditions of~\eqref{eqtn:linear_elasticity} (See~\cite{Brenner_1994,Brenner_Sung_Elasticity} for details).

Let $S_i$, $1\le i\le n$, be the vertices of $\Gamma$, $\Gamma_i$, $1\le i\le n$,the open line segments joining $S_i$ to $S_{i+1}$, and $\ot{n} \hskip 0mm_i$ the
unit outer normal along $\Gamma_i$. Let $p\in H^{1/2}(\Gamma_i)$ and $q\in H^{1/2}(\Gamma_{i+1})$.
Then, $p\equiv q$ at $S_{i+1}$ if
\[\int_0^\delta |q(s)-p(-s)|^2\frac{ds}{s}<\infty,  \]
where $s$ is the oriented arc length measured from $S_{i+1}$, and $\delta$ is a positive number less than
$\min\{|\Gamma_i|: 1\le i\le n\}$. Then, equation~\eqref{eqtn:linear_elasticity} can be written more precisely as
\begin{equation}\label{eqtn:linear_elasticity_precise}
\begin{split}
-\ot{\rm{div}}\big(2\mu\varepsilon(\ot{u})
+\lambda\mathrm{tr}(\varepsilon(\ot{u}))\ott{\delta}\big)&=\ot{f}\quad \mbox{in}\ \Omega\\
\big(2\mu\varepsilon(\ot{u})
+\lambda\mathrm{tr}(\varepsilon(\ot{u}))\ott{\delta}\big)\ot{n}\hskip0mm_i\big|_{\Gamma_i}
&=\ot{g}\hskip0mm_i \quad 1\le i \le n,
\end{split}
\end{equation}
where $\ot{f}\in \ot{L}\hskip0mm_2(\Omega)$ and $\ot{g}\hskip0mm_i\in \ot{H}\hskip0mm^{1/2}(\Gamma_i)$
satisfy
\[\ot{g}\hskip0mm_i\cdot \ot{n}\hskip0mm_{i+1}\equiv
\ot{g}\hskip0mm_{i+1}\cdot\ot{n}\hskip0mm_i\quad \mbox{at}\quad
 S_{i+1}\quad \mbox{for}\quad 1\le i \le n.\]

Denote by $\rm{RM}$ and $\mathrm{RM}^\perp$ the space of rigid modes and its
orthogonal complement with respect to $\ot{L} \hskip 0mm_2(\Omega)$,
respectively and by $\ot{H}\hskip0mm_\perp^1(\Omega)
:=\ot{H}^1(\Omega)\cap
\mathrm{RM}^\perp$ the intersection of $\ot{H}^(\Omega)$
with the $\ot{L}\hskip0mm_2$ orthogonal complement
$\mathrm{RM}^\perp$ of $\mathrm{RM}$. From integration by parts,
we obtain the weak formulation of~\eqref{eqtn:linear_elasticity_precise} as follows:\\
\indent Find $\ot{u}\in
\ot{H}\hskip0mm_\perp^1(\Omega)$
such that
\begin{equation}\label{variational form}
a_{LE}(\ot{u},\ot{v})=\int_\Omega\ot{f}\cdot\ot{v}~dx+\sum_{i=1}^n \int_{\Gamma_i} \ot{g} \hskip 0mm_i\cdot
\ot{v}|_{\Gamma_i}~ds
\end{equation}
for all $\ot{v}\in \ot{H}\hskip0mm_\perp^1(\Omega)$,
where
\[a_{LE}(\ot{u},\ot{v}):=\int_\Omega
\left(2\mu\langle\varepsilon(\ot{u})
,\varepsilon(\ot{v})\rangle_F+\lambda(\nabla\cdot\ot{u})
(\nabla\cdot\ot{v})\right)dx.\]
Here and below,  $\langle\cdot,\cdot\rangle_F$ and
$\langle\cdot,\cdot\rangle$ are the Frobenius inner product for
matrices and the standard Euclidean inner product for vectors in
$\mathbb{R}^d$, respectively. The corresponding norms are denoted with
$|\cdot|_F$ and $|\cdot|$, respectively. A sufficient condition for existence of a solution to~\eqref{variational form} is that the
following compatibility condition is satisfied:
\[ \int_\Omega \ot{f}\cdot\ot{v}~dx+
\sum_{i=1}^n\int_{\Gamma_i}
\ot{g} \hskip 0mm_i\cdot\ot{v}\big|_{\Gamma_i}~ds=0,\quad
\forall \ot{v}\in \mathrm{RM}.\]

Following~\cite{JXu_SIAM_Review},
we write $x_1\lesssim y_1, x_2\gtrsim
y_2$ and $x_3\doublebarsim y_3$ whenever there exist constants $C_1,c_2,c_3$
and $C_4$, independent of the mesh size $h$,
such that
\[x_1\le C_1 y_1,\quad x_2\ge c_2 y_2,\quad\mbox{and}\quad
c_3 x_3\le y_3 \le C_4x_3.\]
Moreover, we write $\lesssim_\lambda, \gtrsim_\lambda$ and
$\doublebarsim_\lambda$ when $\lesssim, \gtrsim$ and
$\doublebarsim$ are dependent on the the Lam\'{e} coefficient $\lambda$.

\section{A multilevel preconditioning for the GFEM problems}

In this section, we present our GFEM-based preconditioner,
using results from~\cite{Cho_Zikatanov2006} to motivate our scheme.  Let
$\Omega\subset\mathbb{R}^d$ be a polygonal domain, with $d=2,3$ and  $V_1$ be the associated
piecewise linear finite element space on a quasi-uniform triangulation
$\mathcal{T}_h$ of $\Omega$. Then the GFEM space $V^{GFEM}$ is
defined as
\[V^{GFEM}:=V_1+\sum_{k=1}^d\sum_{i=1}^d \mbox{span}\{\psi_{k,i}\phi_i\},\]
where $V_1$ corresponds to continuous piecewise linear elements
$\{\phi_i\}_{i=1}^n$ and $\psi_{k,i}=\frac{x_k-x_{k,i}}{h}$ with $x_k$ the
$k$-th component of $x$ and $x_{k,i}$ the $k$-th value associated with the
$i$-th nodal point. Consider the bilinear form
\begin{equation}\label{weak elliptic}
a(u,v)=\int_\Omega \nabla u\cdot \nabla v,\quad\forall u, v\in V^{GFEM}.
\end{equation}
It is shown in~\cite{Cho_Zikatanov2006} that the kernel of the stiffness matrix $A$ corresponding to~\eqref{weak elliptic} is characterized by the space $\mathrm{RM}$ of rigid modes. To obtain an efficient preconditioner for
$a(\cdot,\cdot)$ on $V^{GFEM}$, we thus choose as auxiliary space
$W=V_1\times V_1^d$ defined by
\[\mathbf{u}\in W,\quad \mathbf{u}=(u,\ot{u}),\quad \ot{u}=(u_k)\quad \mbox{where}
\quad u=\sum_{i=1}^n \alpha_i \phi_i,\ u_k=\sum_{i=1}^n
\alpha_{k,i}\phi_i,\] equipped with
$a_W(\mathbf{u},\mathbf{v})=a(u,v)+\int_\Omega\langle
\varepsilon(\ot{u}),\varepsilon(\ot{v})\rangle_F~dx$. Define $\Pi:W\rightarrow V^{GFEM}$ to be:
\[ \mathbf{u}\rightarrow \Pi\mathbf{u}=\sum_{i=1}^n \alpha_i\phi_i +\sum_{k=1}^d\sum_{i=1}^n \alpha_{k,i}\psi_{k,i}\phi_i, \]
and take $A_G:V^{GFEM}\rightarrow (V^{GFEM})'$ and $A_W:W\rightarrow
W'$ to be the isomorphisms associated with $a(\cdot,\cdot)$ and
$a_W(\cdot,\cdot)$, respectively.  Here, $'$ and $*$ denote the dual
spaces and adjoint operators.

We consider the case when the linear part of the GFEM elements are zero: $W = \{0\}\times V^d_1$, so that
$\Pi:W\rightarrow
V^{GFFEM}$, or more precisely,
\[ \mathbf{u}=(0,\ot{u})\longrightarrow \Pi\mathbf{u}=0+\sum_{k=1}^d\sum_{i=1}^n \alpha_{k,i}\psi_{k,i}\phi_i, \]
or
\[ \ot{u}\in W\longrightarrow \Pi\ot{u}=\sum_{k=1}^d\sum_{i=1}^n \alpha_{k,i}\psi_{k,i}\phi_i\in V^{GFEM}.
\]
We remark that $\rm{ker}(A_G)$ is isomorphic to $\rm{ker}(A_W)$ where ker denotes the kernel of the operator. The following spectral equivalence, proved in the next subsection, then holds:
\begin{equation}\label{CZ_specEq}
a_W(\ot{u},\ot{u}) \doublebarsim
a(\Pi\ot{u},\Pi\ot{u}),\quad \forall \ot{u}\in W.
\end{equation}
Now, by the auxiliary space lemma~\cite{Nepomnyaschikh_1992,JXu_aux},
$\Pi B \Pi^*$ is a uniform
preconditioner for $A_G$, namely, $\kappa(\Pi B
\Pi^*A_G)\lesssim 1$ where $B$ is a BPX preconditioner for $A_W$ (see~\cite{Cho_Zikatanov2006}
for a detailed proof).
$$\begin{CD}
W @>A_W>> W'\\
@V\Pi VV  @AA\Pi^* A\\
V^{GFEM} @>>A_G> (V^{GFEM})'
\end{CD}$$
Hereafter, we use $\Pi_G$ for the GFEM interpolant associated with the GFEM space, $V^G$.

\subsection{Spectral Equivalence}  Consider the following bilinear form defined for piecewise linear,
continuous vector fields on a triangulation of $\Omega$ with
simplexes $\mathcal{T}_h$
\[
a_W(\ot{u},\ot{v}) : = \int_\Omega
\langle \varepsilon(\ot{u}),\varepsilon(\ot{v})\rangle_F.
\]

For a given piecewise linear continuous vector field, $\ot{u}$, denote the corresponding
element in the GFEM space by $u_G$. Consider
the isomorphism, $\Pi$, between the GFEM space (with $0$ linear part)
and the piece-wise linear continuous vector fields in $\mathbb{R}^d$: $$u_G=\Pi\ot{u}=\sum_{k=1}^d\sum_{i=1}^n
 \alpha_{k,i}\psi_{k,i}\phi_i.$$  By direct computation
\[
u_G(\mathbf{x}) = \frac{\langle \mathbf{x}, \ot{u}\rangle -\langle \mathbf{x}, \ot{u}\rangle_I}{h},
\]
where $w_I$ denotes the continuous linear interpolant of a function $w$.
We now arrive at the following lemma.

\begin{lemma}
The following relations hold for any fixed simplex $T$ of the
triangulation and any $\mathbf{x}\in T$:
\begin{equation}\label{eq:ug1}
u_G(\mathbf{x}) =
\frac1h(\langle \varepsilon(\ot{u}) \mathbf{x},\mathbf{x}\rangle
-\langle \varepsilon(\ot{u}) \mathbf{x},\mathbf{x}\rangle_I).
\end{equation}
\end{lemma}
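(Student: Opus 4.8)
The plan is to bootstrap from the identity $u_G(\mathbf{x}) = \tfrac1h\big(\langle \mathbf{x},\ot{u}(\mathbf{x})\rangle - \langle \mathbf{x},\ot{u}(\mathbf{x})\rangle_I\big)$ established just above the statement, and to show that, on the fixed simplex $T$, the scalar function $\mathbf{x}\mapsto\langle \mathbf{x},\ot{u}(\mathbf{x})\rangle$ may be replaced inside the operator $w\mapsto w-w_I$ by $\mathbf{x}\mapsto\langle\varepsilon(\ot{u})\mathbf{x},\mathbf{x}\rangle$ without changing the outcome. Since the nodal interpolant $w\mapsto w_I$ is a linear map that acts elementwise and reproduces affine functions exactly on each simplex, it suffices to check that these two functions \emph{differ by an affine function of $\mathbf{x}$ on $T$}: the affine discrepancy is then annihilated by $w\mapsto w-w_I$, and dividing by $h$ yields \eqref{eq:ug1}.

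To verify the affine-discrepancy claim I would fix $T$ and use that $\ot{u}$ is piecewise linear, so its Jacobian $D\ot{u}=(\partial_j u_i)_{i,j}$ is constant on $T$ and $\ot{u}(\mathbf{x}) = D\ot{u}\,\mathbf{x} + \ot{c}$ there for some constant vector $\ot{c}$. Expanding,
\[
\langle \mathbf{x},\ot{u}(\mathbf{x})\rangle = \langle \mathbf{x}, D\ot{u}\,\mathbf{x}\rangle + \langle \mathbf{x},\ot{c}\rangle .
\]
The skew-symmetric part of $D\ot{u}$ contributes nothing to the quadratic form $\mathbf{x}\mapsto\langle \mathbf{x}, D\ot{u}\,\mathbf{x}\rangle$, so this term equals $\langle \mathbf{x},\tfrac12\big(D\ot{u}+(D\ot{u})^{\top}\big)\mathbf{x}\rangle = \langle\varepsilon(\ot{u})\mathbf{x},\mathbf{x}\rangle$, using that $\varepsilon(\ot{u})|_T=\tfrac12\big(D\ot{u}+(D\ot{u})^{\top}\big)$ and is symmetric. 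Hence
\[
\langle \mathbf{x},\ot{u}(\mathbf{x})\rangle - \langle\varepsilon(\ot{u})\mathbf{x},\mathbf{x}\rangle = \langle \mathbf{x},\ot{c}\rangle ,
\]
which is affine (indeed linear) in $\mathbf{x}$ on $T$, as needed; feeding this back into the first paragraph closes the argument.

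There is no genuine obstacle here — the proof is a short piece of linear algebra. The only points deserving a word of care are (i) that the interpolant in \eqref{eq:ug1} is to be understood on the fixed simplex $T$, which for the continuous integrand $\langle\mathbf{x},\ot{u}(\mathbf{x})\rangle$ coincides with the restriction of the global continuous piecewise-linear interpolant, so that the elementwise cancellation of $\langle\mathbf{x},\ot{c}\rangle$ is legitimate, and (ii) the elementary fact that the skew part of a matrix drops out of its associated quadratic form, which is exactly what turns $D\ot{u}$ into $\varepsilon(\ot{u})$. Both are routine, so I expect the whole proof to amount to two or three lines once the starting identity for $u_G$ is in hand.
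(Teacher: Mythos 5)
Your proof is correct and follows essentially the same route as the paper's: write $\ot{u}$ as an affine map with constant gradient on $T$, observe that the affine part of $\langle \mathbf{x},\ot{u}(\mathbf{x})\rangle$ is annihilated by $w\mapsto w-w_I$, and use the elementary fact that a quadratic form sees only the symmetric part of its matrix to replace $\nabla\ot{u}$ by $\varepsilon(\ot{u})$. The only cosmetic difference is that the paper organizes the first step as a Taylor expansion about a point $\mathbf{x}_0\in T$ rather than writing $\ot{u}(\mathbf{x})=D\ot{u}\,\mathbf{x}+\ot{c}$ directly.
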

\begin{proof}
The proof follows by taking a Taylor expansion of $\ot{u}$ and using the fact that
$\ot{u}$ is a linear vector field on $T$ and hence $\nabla
\ot{u}$ is a constant matrix on $T$:
\[
\langle\ot{u}(\mathbf{x}),\mathbf{x}\rangle =
\langle\ot{u}(\mathbf{x}_0),\mathbf{x}\rangle +
\langle[\nabla\ot{u}]\mathbf{x},\mathbf{x}\rangle  -
\langle[\nabla\ot{u}]\mathbf{x}_0,\mathbf{x}\rangle.
\]
Taking the linear interpolant on both sides and subtracting leads to
\[
u_G(\mathbf{x}) = \frac1h
(\langle [\nabla\ot{u}] \mathbf{x},\mathbf{x}\rangle
-\langle [\nabla\ot{u}] \mathbf{x},\mathbf{x}\rangle_I).
\]
The proof is concluded by observing that for any matrix $Z\in
\mathbb{R}^{d\times d}$ we have
\[
\langle Z \mathbf{x},\mathbf{x}\rangle
=\frac12\langle (Z +Z^T)\mathbf{x},\mathbf{x}\rangle,
\]
where $Z^T$ is the transpose of $Z$,
\end{proof}

Writing out $1=\sum_{i=1}^{d+1}\lambda_i$ and
$\mathbf{x} = \sum_{i=1}^{d+1}\lambda_i \mathbf{x}_i$, where $\mathbf{x}_i$ are the vertices of $T$, and $\lambda_i(\mathbf{x})$ are the barycentric coordinate functions, we obtain that
\begin{equation}\label{eq definition of ug}
u_G=\sum_{E\subset T}
\langle\varepsilon(\ot{u})\frac{(x_i-x_j)}{h},x_i-x_j\rangle
\varphi_E(\mathbf{x})
\end{equation}
where $\varphi_E=\lambda_i\lambda_j$ on the edge
$\overline{x_ix_j}$. Differentiating and taking the $L_2$ norm we have
\begin{equation}
\|\nabla
u_G\|_{0,T}^2=\sum_{E^\prime \subset T}\sum_{E\subset T}\Big(\varepsilon(\ot{u})\frac{x_E}{h},x_E
\Big)\Big(\int_T \nabla \varphi_E\cdot\nabla\varphi_{E'}\Big)
\Big(\varepsilon(\ot{u})\frac{x_{E'}}{h},x_{E'}\Big),
\end{equation}
where $x_E=x_i-x_j$.
The proof of the spectral equivalence lemma uses the following
technical result.
\begin{proposition}\label{proposition matrix}
Let $Z\in \mathbb{R}_{\text{sym}}^{d\times d}$ be a symmetric matrix
and $\widetilde{T}$ be a nondegenerate simplex in $\mathbb{R}^d$ whose
edges of size (independent on $h$). Then
\[
|Z|^2_F \eqsim \sum_{E\subset \widetilde{T}} (\langle Z y_E,
y_E\rangle)^2,
\]
where the constants of equivalence depend on the spatial dimension $d$, and $y_E$ is a vector parallel to the edge $E$.
\end{proposition}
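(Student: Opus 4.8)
The plan is to read the claim as an equivalence of two norms on the finite-dimensional space $\mathbb{R}_{\text{sym}}^{d\times d}$ of symmetric matrices, and to reduce it to the standard fact that all norms on a finite-dimensional vector space are mutually equivalent. (Symmetry of $Z$ is essential: an antisymmetric part is invisible to every $\langle Z y_E, y_E\rangle$, which is why the statement is restricted to symmetric matrices — in the application $Z=\varepsilon(\ot{u})$ is symmetric.) First I would rewrite the right-hand side via the identity $\langle Z y_E, y_E\rangle = y_E^T Z y_E = \mathrm{tr}(Z\, y_E y_E^T) = \langle Z, y_E y_E^T\rangle_F$, so that
\[
\sum_{E\subset\widetilde T}\bigl(\langle Z y_E, y_E\rangle\bigr)^2 \;=\; \sum_{E\subset\widetilde T}\bigl(\langle Z, y_E y_E^T\rangle_F\bigr)^2 \;=\; \|\Phi Z\|^2 ,
\]
where $\Phi\colon\mathbb{R}_{\text{sym}}^{d\times d}\to\mathbb{R}^{N}$, with $N$ the number of edges of $\widetilde T$, sends $Z$ to the vector of its Frobenius inner products with the rank-one symmetric matrices $y_E y_E^T$. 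This is a seminorm squared automatically; the content of the proposition is that it is in fact a norm, i.e. that $\Phi$ is injective, equivalently that $\{\,y_E y_E^T : E\subset\widetilde T\,\}$ spans $\mathbb{R}_{\text{sym}}^{d\times d}$.

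The heart of the argument is then a dimension count together with a \emph{linear independence} claim. A nondegenerate simplex in $\mathbb{R}^d$ has $d+1$ vertices and hence $N=\binom{d+1}{2}=\tfrac{d(d+1)}{2}$ edges, which is precisely $\dim\mathbb{R}_{\text{sym}}^{d\times d}$, so it suffices to show the family $\{y_E y_E^T\}$ is linearly independent. Placing one vertex of $\widetilde T$ at the origin makes the remaining $d$ vertices $v_1,\dots,v_d$ linearly independent by nondegeneracy, and after a linear change of coordinates (which acts on rank-one symmetric matrices by $M\mapsto AMA^T$ with $A$ invertible, hence preserves linear independence) we may take $v_j=e_j$. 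The $d$ edges through the origin then contribute the matrices $e_j e_j^T$, and the remaining edges contribute $(e_i-e_j)(e_i-e_j)^T = e_i e_i^T + e_j e_j^T - (e_i e_j^T + e_j e_i^T)$ for $1\le i<j\le d$. Expressed in the standard basis $\{e_j e_j^T\}\cup\{e_i e_j^T+e_j e_i^T\}$ of $\mathbb{R}_{\text{sym}}^{d\times d}$, the transition matrix is block upper triangular with invertible diagonal blocks $I_d$ and $-I_{d(d-1)/2}$, hence invertible; this yields the independence, and therefore the injectivity of $\Phi$.

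With $\Phi$ injective, $Z\mapsto\|\Phi Z\|$ and $Z\mapsto|Z|_F$ are two norms on the finite-dimensional space $\mathbb{R}_{\text{sym}}^{d\times d}$ and so are equivalent, with equivalence constants depending only on $\Phi$, i.e. only on $d$ and the shape of $\widetilde T$. Concretely, the upper bound can be made explicit by Cauchy--Schwarz: $(\langle Z y_E, y_E\rangle)^2\le|Z y_E|^2|y_E|^2\le|Z|_F^2\,|y_E|^4$, so $\sum_E(\langle Z y_E, y_E\rangle)^2\le\bigl(\sum_E|y_E|^4\bigr)|Z|_F^2$; the lower bound follows from compactness of the unit sphere of $\mathbb{R}_{\text{sym}}^{d\times d}$ together with positivity of $Z\mapsto\|\Phi Z\|$ there. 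Since the edges of $\widetilde T$ have length bounded above and below independently of $h$, both constants are independent of $h$. The only genuinely nontrivial point is the linear independence of the $y_E y_E^T$; I would organize the write-up around that claim, with the norm-equivalence conclusion stated as a one-line corollary.
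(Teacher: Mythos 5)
Your proof is correct and follows essentially the same route as the paper: both reduce to the canonical simplex and verify that the quadratic form $Z\mapsto\sum_E(\langle Zy_E,y_E\rangle)^2$ is definite on $\mathbb{R}_{\text{sym}}^{d\times d}$, then invoke equivalence of norms on a finite-dimensional space. The only (cosmetic) difference is that you phrase definiteness as linear independence of the $\tfrac{d(d+1)}{2}$ rank-one matrices $y_Ey_E^T$, whereas the paper directly computes that $\langle Zy_E,y_E\rangle=0$ for all edges forces $Z=0$; these are dual formulations of the same fact.
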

\begin{proof} We first prove that the following expression is an inner product on
$\mathbb{R}_{\text{sym}}^{d\times d}$:
\[
\langle Y, Z\rangle_*:=\sum_{E\subset \widetilde T}
(\langle Z y_E, y_E\rangle
\langle Y y_E, y_E\rangle).
\]
First, we aim to establish that
\[
\langle Z, Z \rangle_* \ge 0,\qquad \mbox{and}\qquad
\langle Z, Z \rangle_* =0 \quad\mbox{iff}\quad Z\equiv 0.
\]
We now provide a detailed proof for
$d=3$ case.  A similar proof applies
to the $d = 2$ case.

Assume that $\langle Z , Z\rangle_* = 0$. This implies that $\langle
Zy_E,y_E \rangle = 0$ for each $E\subset \widetilde T$.  Since the
size of $\widetilde T$ is independent of $h$, we can map it onto the
canonical simplex $\hat{T}$ in $\mathbb{R}^d$ (the convex hull of the
canonical coordinate vectors in $\mathbb{R}^d$). Clearly such mapping
is affine and independent of $h$ as well. Thus, we can limit our proof to
$\hat{T}$. Note that three of
the edges of the canonical simplex are parallel to the coordinate
vectors $\{e_k\}_{k=1}^3$, and there are three more, which are
parallel to the vectors $e_{kl}=(e_k + e_l)$, $1\le k < l\le 3$. Let
$Z=
\begin{pmatrix}
a_1 & a_{12} & a_{13} \\
a_{12} & a_2 & a_{23} \\
a_{13} & a_{23} & a_3
\end{pmatrix}$.
From $\langle Z e_k,e_k\rangle = 0$ it follows that $a_k=0$. Next,
$\langle Z e_{kl},e_{kl} \rangle = 0$ implies that $a_k+a_l + 2a_{kl} = 0$,
and hence $a_{kl}=0$ as well. Proofs of the remaining properties of the inner product are straightforward and hence omitted.

The proof of the proposition is then concluded by using the fact that all norms on the finite dimensional space
$\mathbb{R}_{\text{sym}}^{d\times d}$ are equivalent, with
equivalence constants depending on the dimension.
\end{proof}

We now prove that the Poisson bilinear form on GFEM and $a_W(\cdot,\cdot)$
are equivalent.
\begin{lemma}\label{lemma spectral equivalence}
The following equivalence relation holds
\[
\|\nabla u_G\|_{0,\Omega} \eqsim \|\varepsilon(\ot{u})\|_{0,\Omega}.
\]
\end{lemma}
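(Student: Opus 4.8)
The plan is to reduce the global equivalence to a local one on each simplex and then sum. Fix $T\in\mathcal{T}_h$; since $\ot u$ is piecewise linear, $Z_T:=\varepsilon(\ot u)|_T$ is a constant symmetric matrix, and by \eqref{eq definition of ug} the restriction $u_G|_T$ is the combination $\sum_{E\subset T}c_E^{(T)}\varphi_E$ of edge functions $\varphi_E=\lambda_i\lambda_j$ (on $\overline{x_ix_j}$), with $c_E^{(T)}=\frac1h\langle Z_T(x_i-x_j),x_i-x_j\rangle$. Hence $\|\nabla u_G\|_{0,T}^2=(c^{(T)})^\top M^{(T)}c^{(T)}$, where $M^{(T)}_{E,E'}=\int_T\nabla\varphi_E\cdot\nabla\varphi_{E'}$. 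The goal is to prove $(c^{(T)})^\top M^{(T)}c^{(T)}\eqsim\|\varepsilon(\ot u)\|_{0,T}^2=|T|\,|Z_T|_F^2$ with constants independent of $T$; summing over $T\in\mathcal{T}_h$ then gives the lemma.

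First I would establish that $M^{(T)}$ is symmetric positive definite with all eigenvalues $\eqsim h^{d-2}$, uniformly over $\mathcal{T}_h$. Symmetry and positive semidefiniteness are automatic, as $M^{(T)}$ is a Gram matrix. For definiteness it suffices that $\{\nabla\varphi_E\}_{E\subset T}$ be linearly independent in $L^2(T)^d$: if $\sum_E c_E\nabla\varphi_E\equiv0$ on the connected set $T$, then $\sum_E c_E\varphi_E$ is constant, and evaluating that combination first at the vertices of $T$ (where every $\varphi_E$ vanishes, so the constant is $0$) and then at the edge midpoints (where exactly one $\varphi_E$ is nonzero, with value $1/4$) forces $c\equiv0$. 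The scaling $M^{(T)}\eqsim h^{d-2}\widehat M$, with $\widehat M$ the fixed positive definite matrix on a reference simplex, follows from the affine change of variables to the reference element (two gradients contribute $h^{-2}$, the Jacobian contributes $h^{d}$) together with the quasi-uniformity/shape-regularity of $\mathcal{T}_h$, which makes the relevant family of reference simplices uniformly nondegenerate.

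It remains to estimate $|c^{(T)}|^2$. Setting $y_E=(x_i-x_j)/h$, so that $|y_E|\eqsim1$ and $c_E^{(T)}=h\,\langle Z_T y_E,y_E\rangle$, Proposition~\ref{proposition matrix} applied with $Z=Z_T$ on the rescaled simplex $T/h$ (whose edge lengths are $\eqsim1$) gives
\[
|c^{(T)}|^2=h^2\sum_{E\subset T}\big(\langle Z_T y_E,y_E\rangle\big)^2\;\eqsim\;h^2\,|Z_T|_F^2 ,
\]
with constants uniform in $T$ since $\{T/h:T\in\mathcal{T}_h\}$ is uniformly shape-regular. Combining this with the eigenvalue bounds on $M^{(T)}$,
\[
\|\nabla u_G\|_{0,T}^2=(c^{(T)})^\top M^{(T)}c^{(T)}\eqsim h^{d-2}|c^{(T)}|^2\eqsim h^{d}\,|Z_T|_F^2\eqsim |T|\,|Z_T|_F^2=\|\varepsilon(\ot u)\|_{0,T}^2 ,
\]
and summing over $T\in\mathcal{T}_h$ yields $\|\nabla u_G\|_{0,\Omega}\eqsim\|\varepsilon(\ot u)\|_{0,\Omega}$. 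The main obstacle is the second paragraph: one must simultaneously verify the nondegeneracy of the edge-bubble gradients and pin down the $h^{d-2}$ scaling of $M^{(T)}$ with element-independent constants. Proposition~\ref{proposition matrix} supplies the remaining algebraic ingredient, and the powers of $h$ cancel exactly because of the $1/h$ normalization built into the GFEM enrichment functions $\psi_{k,i}$.
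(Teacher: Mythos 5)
Your proof is correct and follows essentially the same route as the paper's: localize to a single simplex, write $u_G|_T=\sum_{E\subset T}c_E\varphi_E$ with $c_E=\frac1h\langle\varepsilon(\ot{u})x_E,x_E\rangle$, establish $\|\nabla u_G\|_{0,T}^2\eqsim h^{d-2}\sum_E c_E^2$, apply Proposition~\ref{proposition matrix} to the rescaled edge vectors $x_E/h$, and sum over $T$. The only (harmless) divergence is in the middle step: you bound the spectrum of the Gram matrix of the edge-bubble gradients directly via linear independence and affine scaling, whereas the paper obtains the same norm equivalence from the interpolation error estimate (exploiting $(u_G)_I=0$), an inverse inequality, and the $L^2$ equivalence $\|u_G\|_{0,T}^2\eqsim|T|\sum_E\Theta_E^2$.
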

\begin{proof}
Take $T\in \mathcal{T}_h$.
Note that $u_G$ is a quadratic function on $T$, namely
\[
u_G = \sum_{E\subset T} \Theta_E\varphi_E,\quad\mbox{where}\quad
\Theta_E=\Big(\varepsilon(\ot{u})\frac{x_E}{h},x_E \Big),
\]
This function vanishes at all
vertices of $T$, and its linear interpolant $(u_G)_I$ is zero. Thus,
\[
h^{-2}\|u_G\|_{0,T}^2 \lesssim \|\nabla u_G\|_{0,T}^2
\lesssim h^{-2} \|u_G\|_{0,T}^2,
\]
where the first inequality follows from the standard interpolation
error estimate and the second is an inverse inequality.
On the other hand
\[
\|u_G\|_{0,T}^2 \eqsim |T| \sum_{E\subset T} \Theta_E^2.
\]
Hence,
\begin{equation}\label{eqiv_relation}
|\nabla u_G|_T^2 \eqsim  h^{-2}|T|\sum_{E\subset T}
\Theta_E^2=|T|\sum_{E\subset T}\Big(\varepsilon(\ot{u})\frac{x_E}{h},\frac{x_E}{h}
\Big)^2.
\end{equation}
Note that the size of $\frac{x_E}{h}$ is independent of $h$ and thus
we can apply Proposition~\ref{proposition matrix} (with $\widetilde T$
homotetic to $T$, and with edges $y_E=\frac{x_E}{h}$). Hence, from
Proposition~\ref{proposition matrix} and \eqref{eqiv_relation}
we obtain
\begin{equation}
\begin{split}
\|\varepsilon(\ot{u})\|_{0,\Omega}^2 &=\sum_{T\in
\mathcal{T}_h}\|\varepsilon(\ot{u})\|_{0,T}^2=\sum_{T\in
\mathcal{T}_h}
\int_T\langle\varepsilon(\ot{u}),\varepsilon(\ot{u})\rangle_F\\
&\eqsim \sum_{T\in \mathcal{T}_h} |T|\sum_{E\subset T}
\Big(\varepsilon(\ot{u})\frac{x_E}{h},\frac{x_E}{h}\Big)^2
\eqsim \sum_{T\in\mathcal{T}_h} \|\nabla u_G\|_{0,T}^2\\
&= \|\nabla u_G\|_{0,\Omega}^2.
\end{split}
\end{equation}
\end{proof}

\section{Preconditioning for Linear Elasticity}

In this section, we develop and analyze an efficient preconditioner for the linear elasticity problem.
An additional spectral equivalence is needed to verify the auxiliary space lemma and, hence, the optimality for our preconditioner in this setting. This spectral equivalence result follows from a Korn inequality on
$\ot{H}\hskip0mm_\perp^1(\Omega)$ and the Cauchy-Schwarz inequality.
\begin{proposition}[Korn inequality] There exists a positive constant
$C$ such that
\[\|\varepsilon(\ot{v})\|_{0,\Omega} \ge
C\|\ot{v}\|_{1,\Omega}\quad \forall \ot{v}\in
\ot{H}\hskip0mm_\perp^1(\Omega).\]
\end{proposition}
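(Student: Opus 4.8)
The plan is to establish the second Korn inequality on the quotient-type space $\ot{H}\hskip0mm_\perp^1(\Omega)$ by a standard compactness (Peetre--Tartar) argument, exploiting that the $\ot{L}\hskip0mm_2$-orthogonal complement of the rigid modes has been factored out. First I would recall the classical Korn inequality of the second kind on $\ot{H}^1(\Omega)$ without boundary conditions, which for a bounded Lipschitz (here, convex polygonal) domain reads
\[
\|\ot{v}\|_{1,\Omega}^2 \;\le\; C_\Omega\bigl(\|\varepsilon(\ot{v})\|_{0,\Omega}^2 + \|\ot{v}\|_{0,\Omega}^2\bigr),
\qquad \forall\,\ot{v}\in \ot{H}^1(\Omega).
\]
This is the hard analytic input; I would cite it (it follows from the Ne\v{c}as inequality $\|\ot w\|_{0,\Omega}\lesssim \|\ot w\|_{-1,\Omega}+\|\nabla\ot w\|_{-1,\Omega}$ applied to first derivatives of $\ot{v}$, together with the algebraic identity $\partial_k\partial_l v_i = \partial_l\varepsilon_{ik}+\partial_k\varepsilon_{il}-\partial_i\varepsilon_{kl}$) rather than reprove it, since the paper's references $\cite{Brenner_1994}$ and the elasticity literature already contain it.

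Granting the inequality of the second kind, the remaining task is to absorb the lower-order term $\|\ot{v}\|_{0,\Omega}$ on the subspace $\mathrm{RM}^\perp$. Here I would argue by contradiction: if no uniform constant $C$ existed, there would be a sequence $\ot{v}\hskip0mm_n\in \ot{H}\hskip0mm_\perp^1(\Omega)$ with $\|\ot{v}\hskip0mm_n\|_{1,\Omega}=1$ and $\|\varepsilon(\ot{v}\hskip0mm_n)\|_{0,\Omega}\to 0$. By the Rellich--Kondrachov theorem $\ot{H}^1(\Omega)\hookrightarrow\hookrightarrow \ot{L}\hskip0mm_2(\Omega)$ is compact, so along a subsequence $\ot{v}\hskip0mm_n\to \ot{v}$ strongly in $\ot{L}\hskip0mm_2(\Omega)$ and weakly in $\ot{H}^1(\Omega)$. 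Applying the Korn inequality of the second kind to the Cauchy difference $\ot{v}\hskip0mm_n-\ot{v}\hskip0mm_m$ shows $(\ot{v}\hskip0mm_n)$ is Cauchy in $\ot{H}^1(\Omega)$, hence converges strongly to $\ot{v}$ with $\|\ot{v}\|_{1,\Omega}=1$. Then $\varepsilon(\ot{v})=\lim \varepsilon(\ot{v}\hskip0mm_n)=0$, so $\ot{v}$ is a rigid mode; but $\mathrm{RM}^\perp$ is $\ot{L}\hskip0mm_2$-closed, forcing $\ot{v}\in \mathrm{RM}\cap\mathrm{RM}^\perp=\{0\}$, which contradicts $\|\ot{v}\|_{1,\Omega}=1$. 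The desired $C$ therefore exists.

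The main obstacle is not the compactness bookkeeping but ensuring the Korn inequality of the second kind is legitimately available on the polygonal domain $\Omega$ with no essential boundary conditions imposed; this is exactly where the convexity/Lipschitz regularity of $\partial\Omega$ enters, and it is the reason the paper restricts to convex polygonal $\Omega$. One should also note that the space $\mathrm{RM}$ of rigid modes is finite-dimensional (spanned, in $d=2$, by the two constant translations and one infinitesimal rotation), so its $\ot{L}\hskip0mm_2$-orthogonal complement is closed and the intersection argument above is clean. Once the proposition is in hand, combining it with the Cauchy--Schwarz inequality $a_{LE}(\ot{u},\ot{v})\le \|\ot{u}\|_{1,\Omega}\|\ot{v}\|_{1,\Omega}$-type bounds (with constants depending on $\mu,\lambda$) yields the spectral equivalence between $a_{LE}$ and the $\ot{H}^1$ inner product on $\ot{H}\hskip0mm_\perp^1(\Omega)$ that feeds the auxiliary space lemma.
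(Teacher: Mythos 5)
The paper itself offers no argument for this proposition: its ``proof'' is a one-line citation to the references \cite{Arnold_elasticity, Brenner_Scott_1994}. Your proposal supplies an actual proof, and it is correct: you take the second Korn inequality (the version with the lower-order $\|\ot{v}\|_{0,\Omega}$ term, valid on any bounded Lipschitz domain) as the analytic input, and then remove the lower-order term on $\ot{H}\hskip0mm_\perp^1(\Omega)$ by the standard compactness/contradiction (Peetre--Tartar) argument, using Rellich--Kondrachov, the Cauchy-difference trick to upgrade to strong $\ot{H}^1$ convergence, and the fact that the limit must be a rigid mode lying in the closed subspace $\mathrm{RM}^\perp$, hence zero. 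This is essentially the proof one finds in the cited references, so you are not diverging from the paper's intent so much as filling in what it delegates. The only point worth flagging is that the entire analytic burden sits in the second Korn inequality, which you correctly cite rather than reprove; as long as that input is accepted for a convex polygonal (hence Lipschitz) $\Omega$, the rest of your argument is complete, and convexity plays no further role beyond guaranteeing the Lipschitz boundary.
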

\begin{proof} The proof is given in~\cite{Arnold_elasticity, Brenner_Scott_1994}.
\end{proof}
It follows from the Korn inequality that the weak formulation~\eqref{variational form} has a
unique solution $\ot{u}\in \ot{H}\hskip0mm_\perp^1(\Omega)$.
\begin{theorem}\label{thm:spectralequiv_A LE} Further, the following spectral equivalence holds:
\[ \|\varepsilon(\ot{v})\|_{0,\Omega}^2 \lesssim  a_{LE}(\ot{v},\ot{v})\lesssim_\lambda \|\varepsilon(\ot{v})\|_{0,\Omega}^2,\quad
\forall \ot{v}\in \ot{H}\hskip0mm_\perp^1(\Omega).\]
\end{theorem}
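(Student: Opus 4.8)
The plan is to use the explicit form of the bilinear form. For $\ot{v}\in\ot{H}\hskip0mm_\perp^1(\Omega)$ one has
\[
a_{LE}(\ot{v},\ot{v}) = 2\mu\,\|\varepsilon(\ot{v})\|_{0,\Omega}^2 + \lambda\,\|\nabla\cdot\ot{v}\|_{0,\Omega}^2 ,
\]
and under the standing hypothesis that the Poisson ratio is bounded away from $1/2$ (and nonnegative for the materials under consideration), the Lam\'e coefficient $\lambda$ is nonnegative while $\mu$ remains a fixed positive constant. For the lower bound I would simply discard the nonnegative divergence contribution: $a_{LE}(\ot{v},\ot{v})\ge 2\mu\,\|\varepsilon(\ot{v})\|_{0,\Omega}^2\gtrsim\|\varepsilon(\ot{v})\|_{0,\Omega}^2$, the hidden constant being $2\mu$, which is independent of the mesh size. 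No Korn inequality is needed for this direction.

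For the upper bound the idea is to control the divergence term pointwise by the strain tensor. Since $\nabla\cdot\ot{v}=\mathrm{tr}\,\varepsilon(\ot{v})=\langle\varepsilon(\ot{v}),\ott{\delta}\rangle_F$, the Cauchy--Schwarz inequality in the Frobenius inner product yields $|\nabla\cdot\ot{v}|^2\le|\ott{\delta}|_F^2\,|\varepsilon(\ot{v})|_F^2=d\,|\varepsilon(\ot{v})|_F^2$ almost everywhere in $\Omega$; integrating gives $\|\nabla\cdot\ot{v}\|_{0,\Omega}^2\le d\,\|\varepsilon(\ot{v})\|_{0,\Omega}^2$. Substituting this back into the identity above, $a_{LE}(\ot{v},\ot{v})\le(2\mu+d\lambda)\,\|\varepsilon(\ot{v})\|_{0,\Omega}^2$, which is precisely the asserted $\lambda$-dependent bound $\lesssim_\lambda\|\varepsilon(\ot{v})\|_{0,\Omega}^2$, with the constant of equivalence a material constant independent of $h$.

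It is worth pinning down where the Korn inequality actually enters: not in either of the two estimates above, but in ensuring that $\|\varepsilon(\ot{v})\|_{0,\Omega}$ is a genuine norm on $\ot{H}\hskip0mm_\perp^1(\Omega)$, equivalent to $\|\ot{v}\|_{1,\Omega}$, so that this theorem reads as an honest spectral/norm equivalence and can be chained with Lemma~\ref{lemma spectral equivalence} in the auxiliary space argument. I do not expect a genuine obstacle here; the only points requiring care are making the hypothesis on $\nu$ explicit so that $\lambda\ge 0$ is legitimate, and checking that all hidden constants are material constants (independent of $h$), consistent with the $\lesssim$ and $\lesssim_\lambda$ conventions fixed earlier.
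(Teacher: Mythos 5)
Your proposal is correct, and both directions reach the stated bounds, but your upper bound takes a genuinely different (and more elementary) route than the paper's. The paper estimates the divergence term by first passing to the full norm, $\|\nabla\cdot\ot{v}\|_{0,\Omega}\lesssim \|\ot{v}\|_{1,\Omega}$, and then invoking the Korn inequality $\|\ot{v}\|_{1,\Omega}\lesssim\|\varepsilon(\ot{v})\|_{0,\Omega}$ on $\ot{H}\hskip0mm_\perp^1(\Omega)$; you instead use the pointwise algebraic identity $\nabla\cdot\ot{v}=\mathrm{tr}\,\varepsilon(\ot{v})=\langle\varepsilon(\ot{v}),\ott{\delta}\rangle_F$ together with Cauchy--Schwarz in the Frobenius inner product to get $\|\nabla\cdot\ot{v}\|_{0,\Omega}^2\le d\,\|\varepsilon(\ot{v})\|_{0,\Omega}^2$ directly. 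Your route buys an explicit constant $2\mu+d\lambda$, avoids Korn entirely in this theorem, and in fact proves the two-sided estimate on all of $\ot{H}^1(\Omega)$ rather than only on the rigid-mode complement; your closing remark correctly identifies that Korn's inequality is still needed elsewhere, namely to make $\|\varepsilon(\cdot)\|_{0,\Omega}$ a genuine norm on $\ot{H}\hskip0mm_\perp^1(\Omega)$ (equivalently, for well-posedness and for chaining with the $\|\cdot\|_1$ equivalence in the auxiliary space argument). The paper's route gets the same conclusion with a less sharp constant but keeps the Korn inequality front and center, since it is used again immediately afterwards. Both proofs share the same lower bound (drop the nonnegative divergence term), and both implicitly require $\lambda\ge 0$ there; you are right to flag that this follows from the standing hypothesis $0\le\nu<1/2$ and should be stated explicitly.
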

\begin{proof} Let $\ot{v}\in \ot{H}\hskip0mm_\perp^1(\Omega)$. Then we
have
\begin{equation}
\begin{split}
a_{LE}(\ot{v},\ot{v})&=2\mu\|\varepsilon(\ot{v})\|_{0,\Omega}^2
+\lambda\|\nabla\cdot\ot{v}\|_{0,\Omega}^2\\
&\lesssim
2\mu\|\varepsilon(\ot{v})\|_{0,\Omega}^2
+\lambda\|\ot{v}\|_{1,\Omega}^2\\
&\lesssim 2\mu\|\varepsilon(\ot{v})\|_{0,\Omega}^2
+\lambda\|\varepsilon(\ot{v})\|_{0,\Omega}^2\\
&\lesssim_\lambda \|\varepsilon(\ot{v})\|_{0,\Omega}^2,
\end{split}\nonumber
\end{equation}
where the Cauchy-Schwarz inequality and the Korn inequality are
used. The reverse inequality is verified as follows:
\[\|\varepsilon(\ot{v})\|_{0,\Omega}^2\lesssim
2\mu\|\varepsilon(\ot{v})\|_{0,\Omega}^2 + \lambda
\|\nabla\cdot\ot{v}\|_{0,\Omega}^2=a_{LE}(\ot{v},\ot{v}),
\]
for $\forall \ot{v}\in
\ot{H}\hskip0mm_\perp^1(\Omega)$.
\end{proof}

Finally, we proceed to derive our GFEM auxiliary space preconditioner.  Consider
\[a_W(\ot{u},\ot{v}):=\int_\Omega
\langle\varepsilon(\ot{u}),\varepsilon(\ot{u})\rangle_F~dx
\quad\forall\ \ot{u}, \ot{v}\in W,\] and the mapping
$\Pi_G:W\rightarrow V^G$ defined as follows: let $\ot{u}\in
W$ be such that
\[\ot{u}=(u_k),\quad u_k=\sum_{i=1}^n \alpha_{k,i}\phi_i\quad\mbox{for}\quad k=1,
\cdots,d,\] then
\[ u_G=\Pi_G(\ot{u}):=\sum_{k=1}^2\sum_{i=1}^n\alpha_{k,i}\psi_{k,i}\phi_i \in V^G.\]
Let $V_q$ denote the space of continuous
piecewise quadratic finite elements equipped with standard inner product
$a_q(\cdot,\cdot)$ and define the mapping $\Pi_q : V^G \rightarrow
V_q$ by
\[ \Pi_q u_G := u_G, \quad\mbox{for}\quad u_G\in V^{GFEM}.\]
We note that $\Pi_q$ is the natural inclusion from $V^{GFEM}$ into
$V_q$. Associated with $a_q(\cdot,\cdot)$, we write
$A_q:V_q\rightarrow (V_q)'$ for its isomorphism. Going back to the
original problem~\eqref{variational form}, we define the conforming
finite element spaces on the mesh $\mathcal{T}_h$:
\[\mathcal{V}:=\{\ot{u}: \ot{u}=(u_k)_{k=1}^2, u_k\in
V_1\}\] and
\[\mathcal{V}_\perp:=\left\{\ot{u}\in \mathcal{V}: \int_\Omega \ot{u}\cdot\ot{v}~dx=0\quad
\forall \ot{v}\in \mathrm{RM}\right\}\subset
\ot{H}\hskip0mm_\perp^1(\Omega).\] We
remark that $W$ can be viewed as $W/\mathrm{RM}$
since $\mathrm{Ker}(\Pi_G)=\mathrm{RM}$ and then $W$ is isomorphic
to $\mathcal{V}_\perp$. To see this, define the operator $Q_{\mathrm{RM}}:W\rightarrow W$
as follows: for
$\ot{v}\in W$,
\[ (Q_{\tiny{\mathrm{RM}}}\ot{v},\ot{w})_{0,\Omega}
=(\ot{v},\ot{w})_{0,\Omega},\quad \ot{w}\in
\mathrm{RM} \subsetneqq W.\]
Using $Q_{\mathrm{RM}}$, we can regard $W$ as
$\mathrm{Range}(I-Q_{\mathrm{RM}})$ where $I$ is an
identity operator and $\mbox{Range}$ denotes
the range of an operator and, thus, $W$ is
isomorphic to $\mathcal{V}_\perp$.

The auxiliary space lemma
(See~\cite{Nepomnyaschikh_1992,JXu_aux} for a detailed proof) for the linear
elasticity problem reads:
\begin{lemma}[Auxiliary Space Lemma] Assume that $\widetilde{\Pi}:V_q\rightarrow
W$ is a surjective and bounded linear operator, namely, there exists
a positive constant $c_1$ such that $\forall v_q \in V_q$,
\[ a_W(\widetilde{\Pi}v_q,\widetilde{\Pi}v_q)\le c_1 a_q(v_q,v_q).\]
Also, we suppose that there exists a positive constant $c_0$ such
that $\forall \ot{v}\in W$, there is $v_q\in V_q$ so
that
\[ \ot{v}=\widetilde{\Pi}v_q\quad\mbox{and}\quad
a_q(v_q,v_q)\le c_0a_W(\ot{v},\ot{v})\] Then
\begin{eqnarray}\label{eq:specASL}
c_0^{-2}a_W(\ot{u},\ot{u})
\le a_W(\widetilde{\Pi}A_q^{-1}\widetilde{\Pi}^* A_W
\ot{u},\ot{u}) \le c_1^2
a_W(\ot{u},\ot{u}),\qquad \forall \ot{u}\in
W.
\end{eqnarray}
\end{lemma}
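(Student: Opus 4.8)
The plan is to run the standard fictitious (auxiliary) space argument (see~\cite{Nepomnyaschikh_1992,JXu_aux}) for the pair consisting of the ``target'' space $W$ with inner product $a_W(\cdot,\cdot)$ and the ``fictitious'' space $V_q$ with inner product $a_q(\cdot,\cdot)$, transferred by $\widetilde{\Pi}$. Write $B:=\widetilde{\Pi}A_q^{-1}\widetilde{\Pi}^{*}$, so that the operator in the statement is $BA_W\colon W\to W$, going $W\xrightarrow{A_W}W'\xrightarrow{\widetilde{\Pi}^{*}}V_q'\xrightarrow{A_q^{-1}}V_q\xrightarrow{\widetilde{\Pi}}W$. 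The first step is to record that $BA_W$ is self-adjoint with respect to $a_W(\cdot,\cdot)$: one computes $a_W(BA_W u,v)=\langle A_W\widetilde{\Pi}A_q^{-1}\widetilde{\Pi}^{*}A_W u,\,v\rangle$ and checks that this composition is invariant under passing to duals, using that $A_W$ and $A_q$ are symmetric and that $\widetilde{\Pi}^{**}=\widetilde{\Pi}$. It is moreover positive definite: as computed below $a_W(BA_W u,u)=\langle g,A_q^{-1}g\rangle$ with $g=\widetilde{\Pi}^{*}A_W u$, which is nonnegative since $A_q^{-1}$ is symmetric positive definite, and strictly positive for $u\neq0$ because surjectivity of $\widetilde{\Pi}$ makes $\widetilde{\Pi}^{*}$ injective. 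Hence the spectrum of $BA_W$ in the $a_W$--inner product lies in $(0,\infty)$, and to prove \eqref{eq:specASL} it suffices to bound the Rayleigh quotient $a_W(BA_W u,u)/a_W(u,u)$ above and below.

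The key step is the identity
\[
a_W(BA_W u,u)\;=\;\sup_{v_q\in V_q\setminus\{0\}}\frac{a_W(u,\widetilde{\Pi}v_q)^{2}}{a_q(v_q,v_q)}\qquad\forall\,u\in W .
\]
To derive it, set $f=A_W u\in W'$ and $g=\widetilde{\Pi}^{*}f\in V_q'$; then, unwinding the duality pairings and the definition of the adjoint, $a_W(BA_W u,u)=\langle f,\widetilde{\Pi}A_q^{-1}g\rangle=\langle g,A_q^{-1}g\rangle$. Next one uses the elementary variational characterization
\[
\langle g,A_q^{-1}g\rangle\;=\;\sup_{v_q\neq0}\frac{\langle g,v_q\rangle^{2}}{a_q(v_q,v_q)},
\]
which is a one-line consequence of the Cauchy--Schwarz inequality in the $a_q$--inner product, with the supremum attained at $v_q=A_q^{-1}g$. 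Finally $\langle g,v_q\rangle=\langle\widetilde{\Pi}^{*}f,v_q\rangle=\langle f,\widetilde{\Pi}v_q\rangle=\langle A_W u,\widetilde{\Pi}v_q\rangle=a_W(u,\widetilde{\Pi}v_q)$, which converts the previous display into the asserted formula.

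Given the identity, both bounds are immediate. For the upper bound, the Cauchy--Schwarz inequality in $a_W$ gives $a_W(u,\widetilde{\Pi}v_q)^{2}\le a_W(u,u)\,a_W(\widetilde{\Pi}v_q,\widetilde{\Pi}v_q)$, and the boundedness hypothesis on $\widetilde{\Pi}$ (read as $a_W(\widetilde{\Pi}v_q,\widetilde{\Pi}v_q)\le c_1^{2}\,a_q(v_q,v_q)$, the normalization consistent with the constant in \eqref{eq:specASL}) bounds each term in the supremum by $c_1^{2}\,a_W(u,u)$, whence $a_W(BA_W u,u)\le c_1^{2}\,a_W(u,u)$. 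For the lower bound, apply the surjectivity-with-stable-right-inverse hypothesis to the given $u$ to obtain $v_q^{\star}\in V_q$ with $\widetilde{\Pi}v_q^{\star}=u$ and $a_q(v_q^{\star},v_q^{\star})\le c_0^{2}\,a_W(u,u)$; retaining only $v_q^{\star}$ in the supremum yields $a_W(BA_W u,u)\ge a_W(u,u)^{2}/a_q(v_q^{\star},v_q^{\star})\ge c_0^{-2}\,a_W(u,u)$. Combining the two estimates gives \eqref{eq:specASL}.

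The step I expect to demand the most care is the key identity: one must keep the three duality pairings ($W'\times W$, $V_q'\times V_q$, and the action of $\widetilde{\Pi}^{*}$) straight, and must make sure the variational characterization of $\langle g,A_q^{-1}g\rangle$ is taken over all of $V_q$, so that the admissible test functions in the supremum genuinely range over the full fictitious space and not some subspace. The only other subtlety is purely bookkeeping, namely that the constants appearing in \eqref{eq:specASL} are the squares of the constants entering the boundedness and stability hypotheses; with the hypotheses phrased as bounds on the $a_W$-- versus $a_q$--norms (rather than on their squares), the argument above produces exactly $c_1^{2}$ and $c_0^{-2}$.
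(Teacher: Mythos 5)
The paper itself gives no proof of this lemma---it defers entirely to the cited references of Nepomnyaschikh and Xu---so there is no internal argument to compare against; what you have written is the standard fictitious/auxiliary space argument from those references, and it is correct. The key identity
$a_W(BA_W\ot{u},\ot{u})=\sup_{v_q\neq 0}\,a_W(\ot{u},\widetilde{\Pi}v_q)^2/a_q(v_q,v_q)$
is derived correctly (the chain $a_W(BA_W\ot{u},\ot{u})=\langle g,A_q^{-1}g\rangle$ with $g=\widetilde{\Pi}^*A_W\ot{u}$, followed by the Cauchy--Schwarz characterization of $\langle g,A_q^{-1}g\rangle$, is exactly the standard route), and the two hypotheses feed into the supremum exactly as you say. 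The one point that deserves to be made explicit---and you do flag it at the end---is the normalization of the constants: as literally written in the lemma, the hypotheses bound the \emph{squared} energy norms by $c_1$ and $c_0$, and with that reading your argument produces the sharper constants $c_1$ and $c_0^{-1}$ in \eqref{eq:specASL}, not $c_1^2$ and $c_0^{-2}$. The stated conclusion with squared constants is only guaranteed if one reads $c_1,c_0$ as operator-norm constants (so that $c_1^2,c_0^2$ multiply the squared forms), which is the reading you adopt and the one consistent with the cited sources; with the literal hypotheses one can only conclude $c_0c_1\ge 1$, not $c_0\ge1$ and $c_1\ge1$ separately, so the squared-constant form does not follow verbatim. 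This exponent mismatch is a defect of the lemma as stated in the paper, not of your proof. The self-adjointness and positive-definiteness observations at the start are correct but not actually needed for \eqref{eq:specASL}, which is just the two-sided Rayleigh-quotient bound you establish.
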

We recall that $a_W(\cdot,\cdot) \lesssim  a_{LE}(\ot{v},\ot{v})\lesssim_\lambda a_W(\cdot,\cdot)$.
Also, we see that $W$ and $\mathcal{V}_\perp$ are identical.
For these reasons, it suffices to construct a preconditioner for $a_W(\cdot,\cdot)$ on
the space $W$. Let $\Pi_q$ be an inclusion from $V^G$ into $V_q$. We define its Hilbert adjoint
operator $\Pi_q^\dag:V_q\rightarrow V^G$ by
\[
a_q(\Pi_q u_G, v_q)=a_G(u_G, \Pi_q^\dag v_q),\quad\forall u_G\in V^G, v_q\in V_q.
\]
We note that
\[
a_G(\Pi_G\ot{u},\Pi_G\ot{u})\doublebarsim a_W(\ot{u},\ot{u}),\quad \forall \ot{u}\in W,
\]
and
\[
a_G(v_G,v_G)\doublebarsim a_W(\Pi_G^{-1}v_G,\Pi_G^{-1}v_G),\quad v_G\in V^G,
\]
since $\Pi_G$ is bijective. Also, the Hilbert adjoint operator $(\Pi_G^{-1})^\dag$ of $\Pi_G^{-1}$ can be
defined as above. Associated with $a_{LE}(\cdot,\cdot)$
in~\eqref{variational form}, we write its isomorphism
$A_{LE}:\mathcal{V}_\perp\rightarrow (\mathcal{V}_\perp)'$.
We summarize the relations between these spaces and their associated interpolants and dual space in
the commutative diagram provided in Figure~\ref{fig:DeRahm}.

\begin{figure}\label{fig:DeRahm}
\[\begin{CD}
V_q @> A_q>> (V_q)' \\
@V\Pi_q^\dag VV  @AA(\Pi_q^\dag)^* A\\
V^G @>>A_G> (V^G)'\\
@V\Pi_G^{-1} VV  @AA(\Pi_G^{-1})^* A\\
W @>>A_W> W'\\
  @|      @|\\
\mathcal{V}_\perp @>>A_{LE}> (\mathcal{V}_\perp)'\\
\end{CD}\]
\caption{Commutative diagram for $A_{LE}$ and our auxiliary spaces.}
\end{figure}

Let $\widetilde{\Pi}:=\Pi_G^{-1}\Pi_q^\dag$. Then $\widetilde{\Pi}$ is surjective operator since $\Pi_q^\dag$ is surjective.
Letting $v_q\in V_q$, we have

\begin{eqnarray*}
a_W(\widetilde{\Pi} v_q,\widetilde{\Pi} v_q) &=& a_W(\Pi_G^{-1}\Pi_q^\dag v_q,\Pi_G^{-1}\Pi_q^\dag v_q)\\
&\lesssim& a_G(\Pi_q^\dag v_q,\Pi_q^\dag v_q)=a_q(\Pi_q\Pi_q^\dag v_q,v_q)\\
&\le& a_q(\Pi_q\Pi_q^\dag v_q,\Pi_q\Pi_q^\dag v_q)^{1/2}a_q(v_q,v_q)^{1/2}\\
&=& a_G(\Pi_q^\dag v_q,\Pi_q^\dag v_q)^{1/2}a_q(v_q,v_q)^{1/2}\\
&\lesssim&  a_W(\Pi_G^{-1}\Pi_q^\dag v_q,\Pi_G^{-1}\Pi_q^\dag v_q)^{1/2}a_q(v_q,v_q)^{1/2}\\
&=& a_W(\widetilde{\Pi} v_q,\widetilde{\Pi} v_q)^{1/2}a_q(v_q,v_q)^{1/2}.
\end{eqnarray*}
Further, let $\ot{v}\in W$ and set $v_q:=\Pi_q\Pi_G \ot{v} \in V_q$. Then, for $\forall \ot{u}\in W$,
we have
\begin{eqnarray*}
a_W(\widetilde{\Pi} v_q,\ot{u})&=& a_W(\Pi_G^{-1}\Pi_q^\dag\Pi_q\Pi_G\ot{v},\ot{u})\\
&=& a_G(\Pi_q^\dag\Pi_q\Pi_G\ot{v},(\Pi_G^{-1})^\dag\ot{u})\\
&=& a_q(\Pi_q\Pi_G\ot{v},\Pi_q(\Pi_G^{-1})^\dag\ot{u})\\
&=& a_G(\Pi_G\ot{v},(\Pi_G^{-1})^\dag\ot{u})\\
&=& a_G(\ot{v},\ot{u}).
\end{eqnarray*}
Therefore, $\ot{v}=\widetilde{\Pi} v_q$. Moreover,
\begin{eqnarray*}
a_q(v_q,v_q)&=&a_q(\Pi_q\Pi_G\ot{v},\Pi_q\Pi_G\ot{v})\\
&=& a_G(\Pi_G\ot{v},\Pi_G\ot{v})\\
&\lesssim& a_W(\ot{v},\ot{v}).
\end{eqnarray*}
Consequently, we obtain
\begin{equation}\label{condnumPCG}
\kappa(\Pi_G^{-1}\Pi_q^\dag A_q^{-1}(\Pi_q^\dag)^*(\Pi_G^{-1})^*A_W)
=\kappa(\widetilde{\Pi} A_q^{-1}\widetilde{\Pi}^*A_W)\lesssim 1.
\end{equation}
For notational brevity, let $B:=\widetilde{\Pi} A_q^{-1}\widetilde{\Pi}^*$. By \eqref{condnumPCG} and
Theorem~\ref{thm:spectralequiv_A LE}, we get $\kappa(BA_{LE})\lesssim \lambda$, independent of mesh size $h$.

We define the norm $\|\cdot\|_1$ on W as follows:
\[
\|\ot{u}\|_1^2:=|u_1|^2_1+|u_2|^2_1,
\]
where $\ot{u}=(u_1, u_2)\in W$ and write $A_1$ for the isomorphism associated with $\|\cdot\|_1^2$. Then, using the Korn inequality and the Cauchy-Schwarz inequality, we get
\begin{equation}\label{equivrel:A1andAW}
\int_\Omega \langle\varepsilon(\ot{u}),\varepsilon(\ot{u})\rangle_F~dx \doublebarsim \|\ot{u}\|_1^2,\qquad \forall \ot{u} \in W.
\end{equation}
These spectral equivalence relations we proposed basically motivate our choice of GFEM-based auxiliary space preconditioner. We also remark that the space $\ot{H}^1(\Omega)$ has the usual norm
$$
\|\ot{u}\|_{\ot{H}^1(\Omega)}:=\big(\|\ot{u}\|_1^2+
\|\ot{u}\|_{\ot{L}\hskip0mm_2}^2\big)^{1/2}.
$$

We describe an algorithm under which several numerical experiments in next subsection will be performed. Applying the shape functions on FEM into the bilinear forms $a_{LE}(\cdot,\cdot), a_W(\cdot,\cdot)$ and $a_1(\cdot,\cdot)$, we obtain
the Galerkin matrix $\boldsymbol{A}$, which represents matrices $\boldsymbol{A}_{LE}, \boldsymbol{A}_W$ and $\boldsymbol{A}_1$. We consider only a linear system
$$
\boldsymbol{A}x=f,
$$
where $f$ is appropriately chosen. This simplification makes sense by the spectral equivalent argument we established. The algorithm follows the preconditioned conjugate gradient methods in~\cite{Saad_book}.
\begin{algorithm}\label{algorithm:PCG}
\item Compute $r_0=f-\boldsymbol{A}x_0, z_0=\widetilde{\boldsymbol{\Pi}}\boldsymbol{B}_q
    {\widetilde{\boldsymbol{\Pi}}}^T r_0$, and $p_0=z_0$
\item For $j=0,1,...,$ until convergence Do:
\item \quad  $\alpha_j = (r_j,z_j)/(\boldsymbol{A}p_j,p_j)$
\item \quad  $x_{j+1}=x_j+\alpha_jp_j$
\item \quad  $r_{j+1}=r_j-\alpha_j\boldsymbol{A}p_j$
\item \quad  $z_{j+1}=\widetilde{\boldsymbol{\Pi}}\boldsymbol{B}_q
    {\widetilde{\boldsymbol{\Pi}}}^T r_{j+1}$
\item \quad  $\beta_j=(r_{j+1},z_{j+1})/(r_j,z_j)$
\item \quad  $p_{j+1}=z_{j+1}+\beta_j p_j$
\item EndDo
\end{algorithm}
\noindent Here, $\widetilde{\boldsymbol{\Pi}}$ is the matrix representation of $\widetilde{\Pi}$ and $\boldsymbol{B}_q$ is an approximate inverse of $\boldsymbol{A}_q$ where $\boldsymbol{A}_q$ is the Galerkin matrix arising from the piecewise quadratic FEM. The numerical experiments in next section are performed using the stopping criterion $\frac{\|r_k\|}{\|r_0\|}<10^{-8}$, where $r_k$ is the residual of $k$th iteration and initial guess $x_0=(1,-1,\ldots,1,-1)^T$. Taking an account into eigenvalues of the preconditioned system, we can estimate the condition number from parameters in the conjugate gradient algorithm (see~\cite{Saad_book} for more detail).

\section{Numerical experiments}

Here we report computed estimates of the condition number of our auxiliary-space preconditioned system matrix for the linear elasticity model discretized using various choices of mesh spacing on unit square $\Omega=[0,1]^2$. Here, $h$ represents the lengths of the horizontal and vertical sides of triangles in the meshes.
\begin{figure}[htp]
\begin{center}
\subfigure[$h=\frac{1}{2}$]{\includegraphics[width=1.2in]{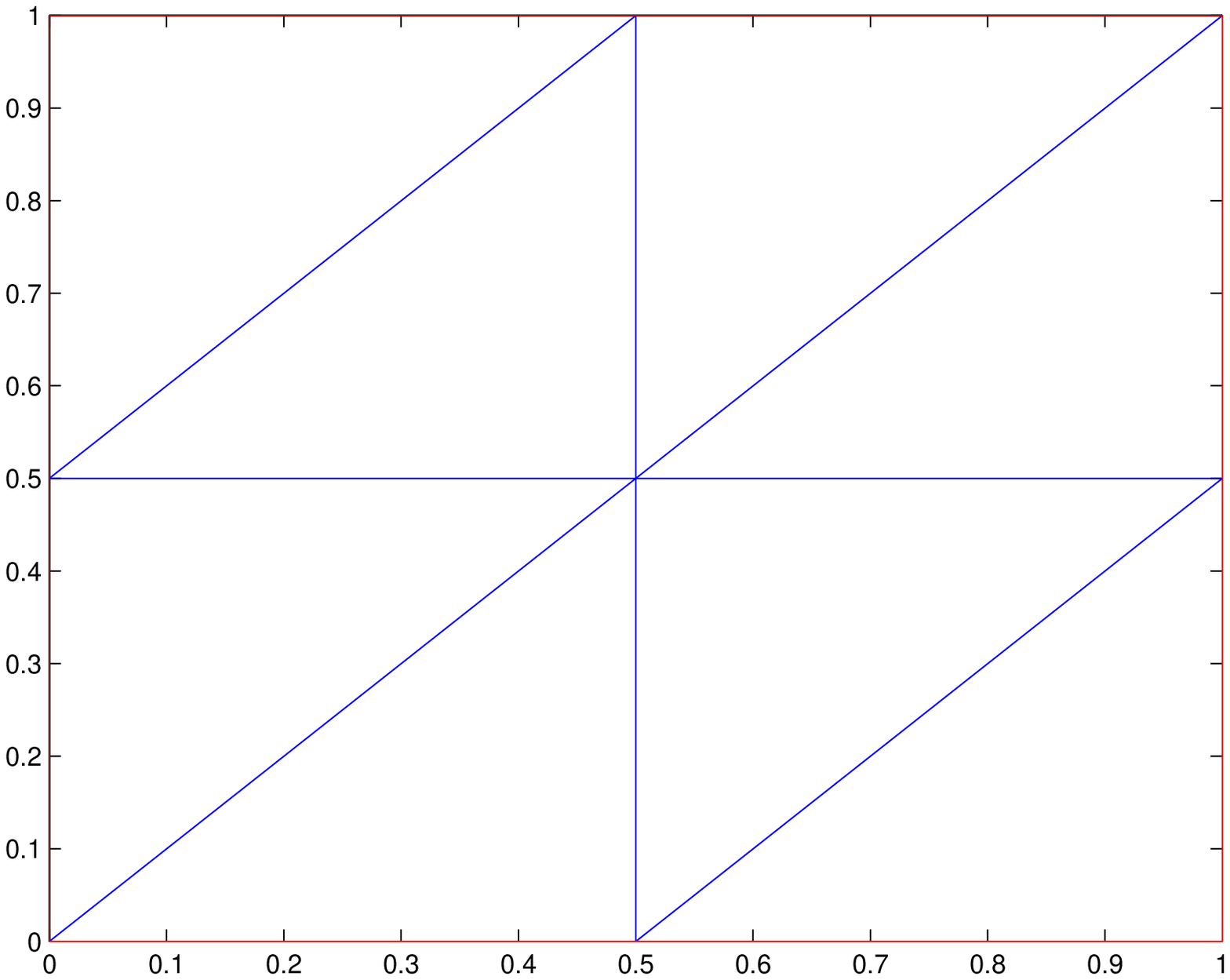}}
\subfigure[$h=\frac{1}{4}$]{\includegraphics[width=1.2in]{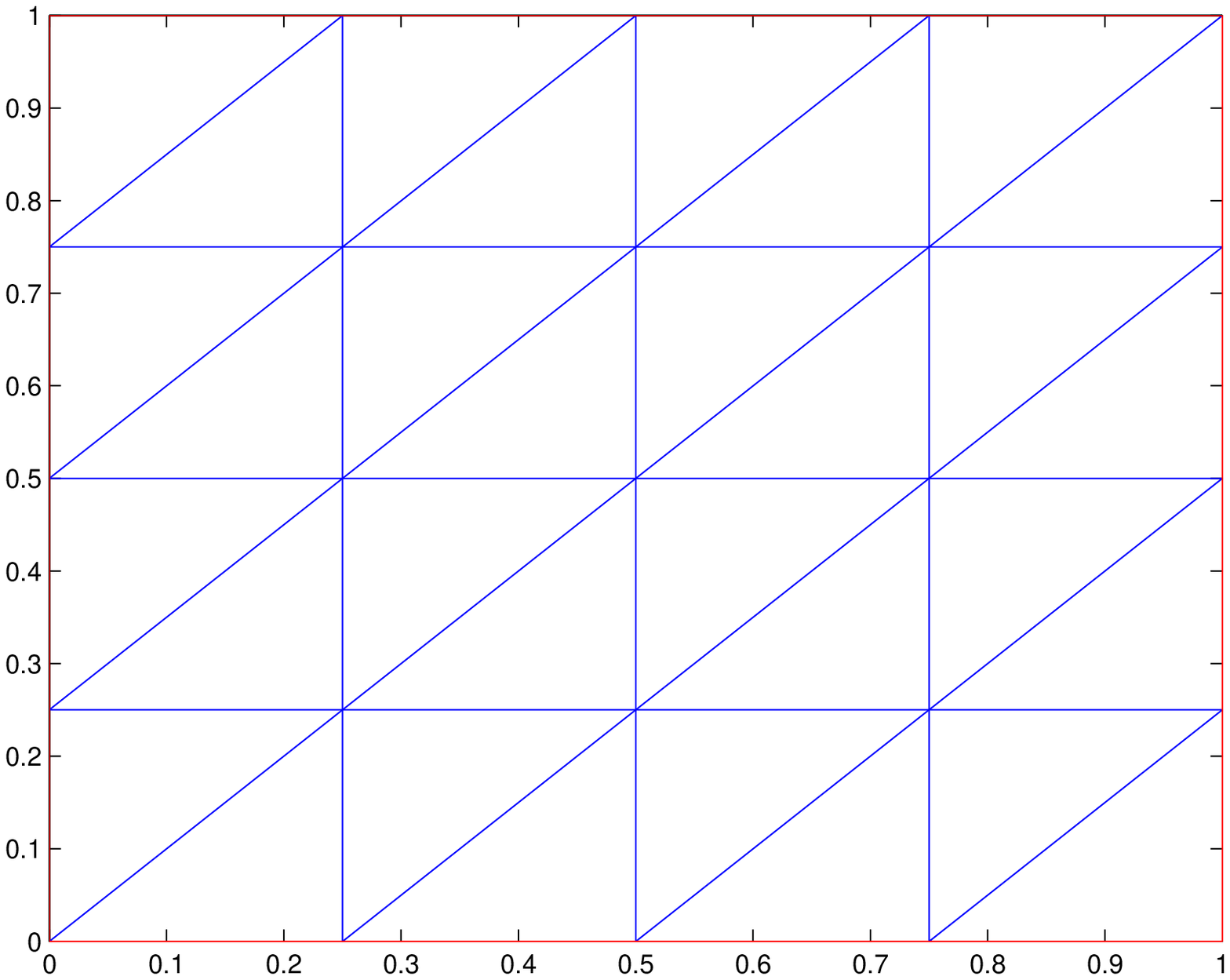}}
\subfigure[$h=\frac{1}{8}$]{\includegraphics[width=1.2in]{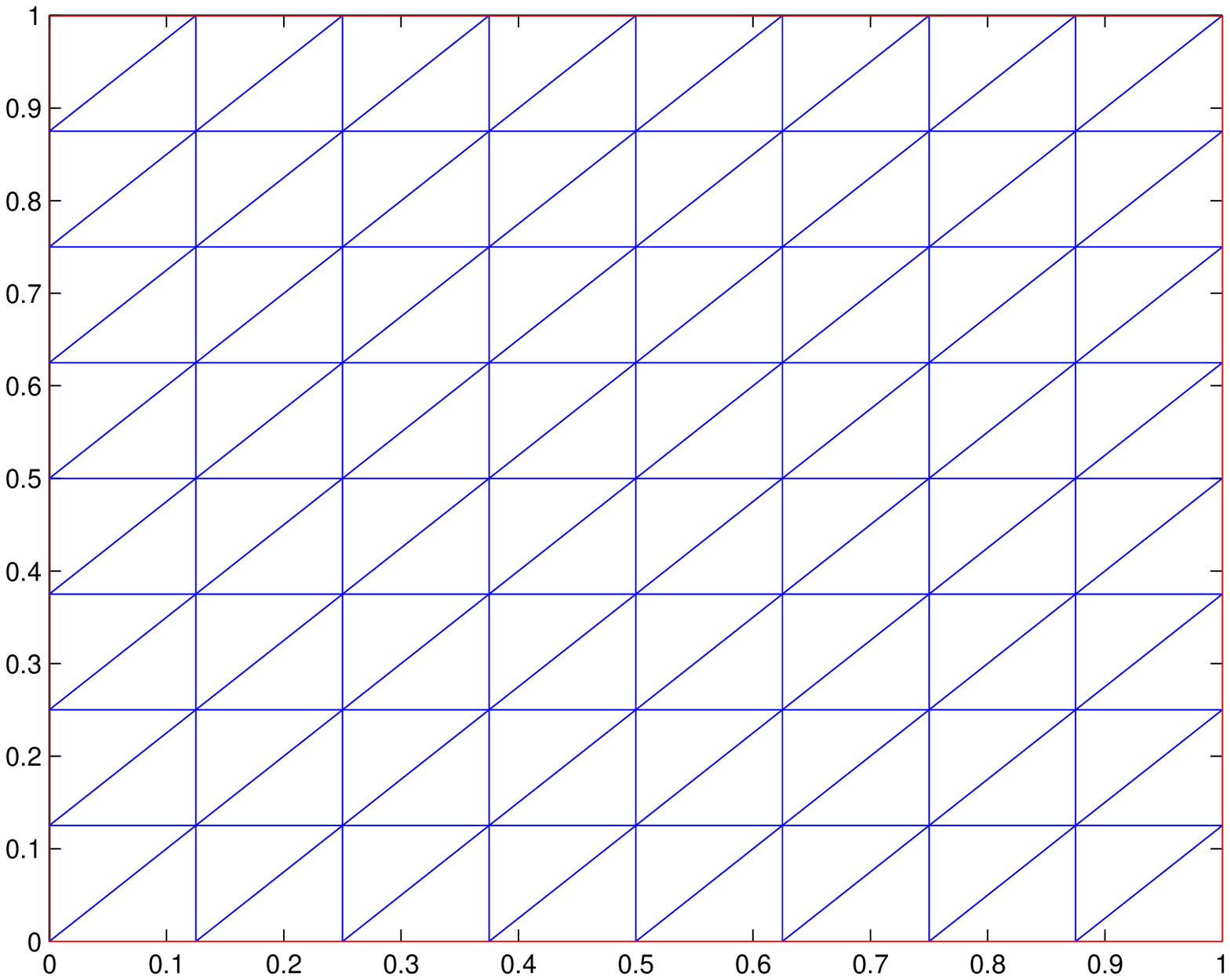}}
\subfigure[$h=\frac{1}{16}$]{\includegraphics[width=1.2in]{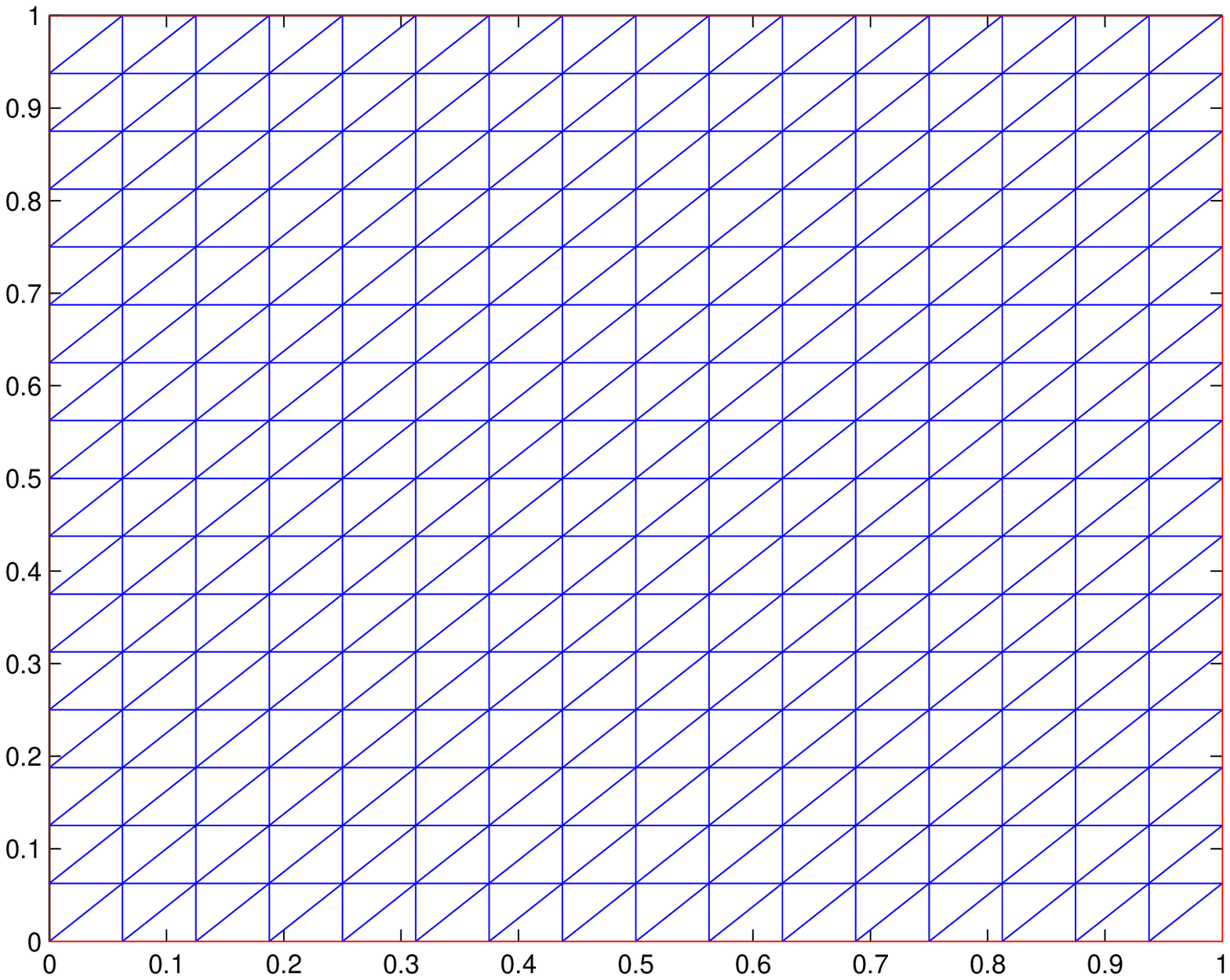}}
\subfigure[$h=\frac{1}{32}$]{\includegraphics[width=1.2in]{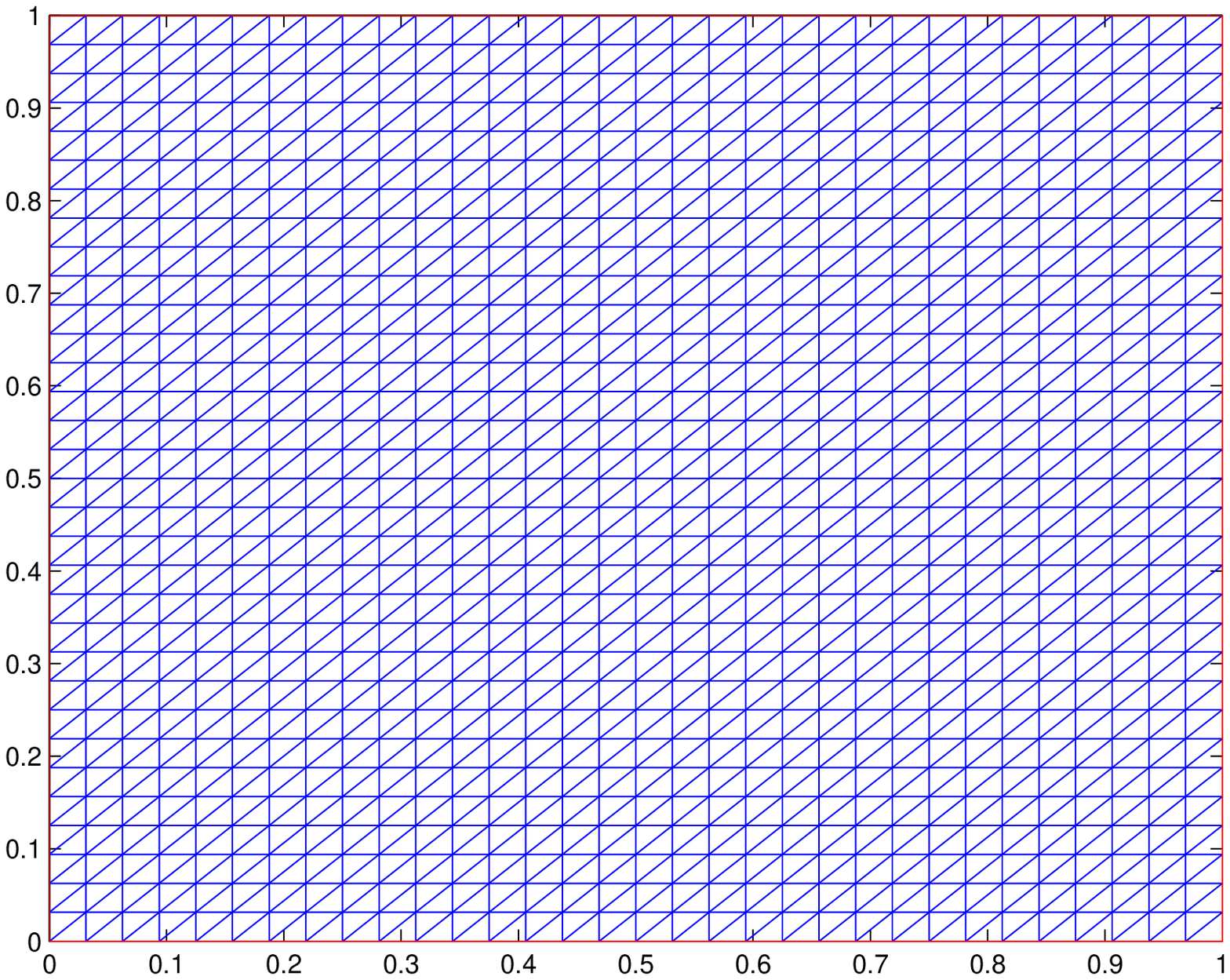}}
\end{center}
\caption{Meshes on the unit square}
\label{fig:meshes}
\end{figure}

{\bf Experiment 1.} We consider a pair of poisson equations on $\Omega$ with homogeneous
Neumann boundaries:
\begin{equation}\label{eqtn:A1}
\begin{cases}
\begin{split}
\begin{pmatrix}
\Delta & 0 \\
0 & \Delta
\end{pmatrix}
\ot{u}&=
\ot{0}\quad\mbox{in}\quad\Omega, \\
\begin{pmatrix}
\frac{\partial}{\partial n} & 0 \\
0 & \frac{\partial}{\partial n}
\end{pmatrix}
\ot{u} &=
\ot{0}\quad\mbox{on}\quad\partial\Omega.
\end{split}
\end{cases}
\end{equation}
We note that $A_1$ is the isomorphism associated with the weak formulation of~\eqref{eqtn:A1} on
the space $W$. The results in Table~\ref{table:condnum1} implies that the condition number of the preconditioned system of $A_1$ is bounded independent of mesh size $h$.
\begin{table}[h]
\begin{center}
\begin{tabular}{||c||c|c|c|c|c|c|c||}
  \hline\hline
  $h$&$\frac{1}{4}$&$\frac{1}{8}$&$\frac{1}{16}$&$\frac{1}{32}$
  &$\frac{1}{64}$\\
  \hline
  \mbox{iter}&20&24&27&27&27\\
  \hline
  $\kappa$&1.92e+1&2.80e+1&3.51e+1&4.02e+1&4.39e+1\\
  \hline\hline
\end{tabular}
\end{center}
\caption{Condition numbers $\kappa(\Pi_G^{-1}\Pi_q^\dag A_q^{-1}(\Pi_q^\dag)^*(\Pi_G^{-1})^*A_1)$.}\label{table:condnum1}
\end{table}

{\bf Experiment 2.} We deal with a special case when $\lambda=0$. Namely, the linear elasticity
reads
\begin{equation}\label{eqtn:A_W}
\begin{split}
-\ot{\rm{div}}\big(\varepsilon(\ot{u})\big)&=\ot{f}\quad \mbox{in}\ \Omega\\
\varepsilon(\ot{u})\ot{n}\Big|_{\Gamma_\ell}&=\ot{0},\quad 1\le
\ell\le 4,
\end{split}
\end{equation}
where $\Gamma_\ell$ represent four sides of the unit
square for $\ell=1,\ldots,4$. Also, without loss of generality, we suppose
that $\mu=\frac12$, here and later. Assume that
the body force $\ot{f}=\begin{pmatrix}f_1\\f_2
\end{pmatrix}$ is defined by
\begin{equation}
f_1=0\quad\mbox{and}\quad f_2=0.
\end{equation}
Then the exact solution $\ot{u}\in
\ot{H}\hskip0mm_\perp^2(\Omega)$ is
\[\ot{u}=\begin{pmatrix}0\\0\end{pmatrix}.\]
We see that $A_W$ is the isomorphism associated with the weak formulation of~\eqref{eqtn:A_W} on $W$.
In Table~\ref{table:condnum2}, $\kappa=C/c$, where $C$ and $c$ are the
smallest and largest constants satisfying:
\[
c a_W(\ot{u},\ot{u}) \le (B_q\Pi_q\Pi_G
\ot{u},\Pi_q\Pi_G \ot{u})_0 \le C
a_W(\ot{u},\ot{u}),
\forall \ot{u}\in \mathcal{V}_\perp.\\
\]
\begin{table}[h]
\begin{center}
\begin{tabular}{||c||c|c|c|c|c|c|c||}
  \hline\hline
  $h$&$\frac{1}{4}$&$\frac{1}{8}$&$\frac{1}{16}$&$\frac{1}{32}$
  &$\frac{1}{64}$ \\
  \hline
  \mbox{iter}&19&19&17&16&15 \\
  \hline
  $\kappa$&6.47e+0&6.70e+0&6.71e+0&6.67e+0&6.48e+0\\
  \hline\hline
\end{tabular}
\end{center}
\caption{Condition numbers $\kappa(\Pi_G^{-1}\Pi_q^\dag A_q^{-1}(\Pi_q^\dag)^*(\Pi_G^{-1})^*A_W)$.}\label{table:condnum2}
\end{table}
In Table~\ref{table:condnum2}, we observe that the preconditioner we devised is an efficient
preconditioner for the special case. Namely, $\kappa(BA_W)\lesssim 1$.

{\bf Experiment 3.} In this experiment, we study more general linear elasticity problems
with various $\lambda$:
\begin{equation}
\begin{split}
-\ot{\rm{div}}\big(\sigma(\ot{u})\big)&=\ot{0}\quad \mbox{in}\ \Omega=\mbox{unit\ square}\\
\sigma(\ot{u})\ot{n}\Big|_{\Gamma_\ell}&=\ot{0}\quad 1\le
\ell\le 4,
\end{split}
\end{equation}
where $\sigma(\ot{u})=\varepsilon(\ot{u})
+\lambda\mathrm{tr}(\varepsilon(\ot{u}))\ott{\delta}$
and $\Gamma_\ell$ are defined as in Experiment 2. In Tables \ref{table:condnum3}, \ref{table:condnum4}
and \ref{table:challenge}, $\kappa=C/c$, where $C$ and $c$ are the
smallest and largest constants satisfying:
\[
c a_{LE}(\ot{u},\ot{u}) \le (B_q\Pi_q\Pi_G
\ot{u},\Pi_q\Pi_G \ot{u})_0 \le C
a_{LE}(\ot{u},\ot{u}),
\forall \ot{u}\in \mathcal{V}_\perp.
\]
\begin{table}[h]
\begin{center}
\begin{tabular}{|c||c|c|c|c|c|c|}
  \hline
  & \multicolumn{2}{c|}{$\lambda=1$}&\multicolumn{2}{c|}{$\lambda=5$}
  &\multicolumn{2}{c|}{$\lambda=10$}\\
  \cline{2-7}
  $h$& iter & $\kappa$ & iter & $\kappa$ & iter & $\kappa$ \\
  \hline\hline
  $\frac{1}{4}$&24&1.33e+1&30&4.60e+1&33&8.60e+1  \\
  \hline
  $\frac{1}{8}$&28&1.39e+1&39&4.51e+1&50&8.60e+1 \\
  \hline
  $\frac{1}{16}$&25&1.40e+1&43&4.57e+1&55&8.55e+1 \\
  \hline
  $\frac{1}{32}$&23&1.40e+1&40&4.56e+1&53&8.55e+1 \\
  \hline
  $\frac{1}{64}$&21&1.39e+1&36&4.54e+1&46&8.49e+1 \\
  \hline
\end{tabular}
\end{center}
\caption{Condition numbers $\kappa(\Pi_G^{-1}\Pi_q^\dag A_q^{-1}(\Pi_q^\dag)^*(\Pi_G^{-1})^*A_{LE})$.}\label{table:condnum3}
\end{table}

{\bf Experiment 4.} For small $\lambda$, we experiment the linear elasticity with homogeneous
pure traction boundary condition:
\begin{equation}\label{eqtn:generalsolution}
\begin{split}
-\ot{\rm{div}}\big(\varepsilon(\ot{u})
+\lambda\mathrm{tr}(\varepsilon(\ot{u}))\ott{\delta}\big)&=\ot{f}\quad \mbox{in}\ \Omega\\
\big(\varepsilon(\ot{u})
+\lambda\mathrm{tr}(\varepsilon(\ot{u}))\ott{\delta}\big)\ot{n}\Big|_{\Gamma_\ell}&=\ot{0}\hskip0mm_\ell\quad
1\le \ell\le 4,
\end{split}
\end{equation}
where $\Gamma_\ell$ are also defined as in Experiment 2 and the body force $\ot{f}=\begin{pmatrix}f_1\\f_2
\end{pmatrix}$ is such that
$$
\ot{u}=\begin{pmatrix}u_1\\u_2\end{pmatrix}=
\begin{pmatrix}x(1-x)y^2(1-y)^2\sin{\pi x}-\frac{2}{15\pi^3}\\  x^2(1-x)^2y^2(1-y)^2\cos{\pi y}\end{pmatrix}\in
\ot{H}\hskip0mm_\perp^2(\Omega)
$$
is the exact solution of~\eqref{eqtn:generalsolution}. The results reported in Tables~\ref{table:condnum4} indicate that the condition number of our preconditioned operator remains bounded independent of mesh size and, hence, provides for an optimal order preconditioner for our linear elasticity model problem. They coincide with our theoretical result that $\kappa(BA_{LE}) \lesssim \lambda$, independent of $h$. We note that although our experiments have been carried out on a structured grid, our analysis carries over to unstructured simplicial meshes and our scheme is thus expected to perform similar for unstructured grids as well.
\begin{table}[h]
\begin{center}
\begin{tabular}{|c||c|c|c|c|c|c|c|c|c|}
  \hline
  & \multicolumn{3}{c|}{$\lambda=1$}&\multicolumn{3}{c|}{$\lambda=5$}
  &\multicolumn{3}{c|}{$\lambda=10$} \\
  \cline{2-10}
  $h$& iter & $\kappa$ &$\ot{H}^1$ norm & iter &$\kappa$&$\ot{H}^1$ norm & iter & $\kappa$
  &$\ot{H}^1$ norm \\
  \hline\hline
  $\frac{1}{4}$&26&1.42e+1&7.71e-3&32&4.60e+1&1.10e-2&35&8.60e+1&1.25e-2\\
  \hline
  $\frac{1}{8}$&31&1.39e+1&4.07e-3&46&4.60e+1&7.56e-3&56&8.60e+1&9.62e-3\\
  \hline
  $\frac{1}{16}$&31&1.41e+1&1.41e-3&53&4.58e+1&3.48e-3&68&8.56e+1&5.21e-3\\
  \hline
  $\frac{1}{32}$&31&1.41e+1&3.91e-4&54&4.59e+1&1.12e-3&73&8.58e+1&1.92e-3\\
  \hline
  $\frac{1}{64}$&31&1.41e+1&1.01e-4&54&4.59e+1&3.04e-4&72&8.58e+1&5.49e-4\\
  \hline
\end{tabular}
\end{center}
\caption{Condition numbers $\kappa(\Pi_G^{-1}\Pi_q^\dag A_q^{-1}(\Pi_q^\dag)^*(\Pi_G^{-1})^*A_{LE})$ and the discrete $\ot{H}^1$ errors between the exact solution and the numerical solution.}\label{table:condnum4}
\end{table}

{\bf Challenging experiment.} It is well known that the performance of the
piecewise linear finite elements deteriorates as $\lambda$ approaches $\infty$. In the elasticity literature, it is called the locking phenomenon (see~\cite{Babuska_Suri1992a,Babuska_Suri1992b} for more information). To overcome the effects of the locking, several methods have been suggested in~\cite{Arnold_Brezzi_Falk_Marini,Brenner_1994,Cai_Manteuffel_McCormick1998,Cai_Lee_Manteuffel_McCormick2000,
Falk_1991,Lee_1998_2,Schoberl}. Both our theoretical and numerical results show that the GFEM-based auxiliary preconditioner for the linear elasticity problem with small $\lambda$ works very efficiently. It would also be worthwhile to investigate the numerical results when the preconditioner is applied to linear elasticity for large $\lambda$. We observe in Table~\ref{table:challenge} that
even when $\lambda$ is large enough, the condition number of the preconditioned systems is bounded independent of mesh size $h$ and the discretization error estimates look reasonable if $h$ is sufficiently small.
\begin{table}[h]
\begin{center}
{\tiny
\begin{tabular}{|c||c|c|c|c|c|c|c|c|c|c|c|c|}
  \hline
  &\multicolumn{3}{c|}{$\lambda=50~(\nu=0.49505)$}
  &\multicolumn{3}{c|}{$\lambda=100~(\nu=0.49751)$}
  &\multicolumn{3}{c|}{$\lambda=500~(\nu=0.49950)$}
  &\multicolumn{3}{c|}{$\lambda=1000~(\nu=0.49975)$}\\
  \cline{2-13}
  $h$& iter & $\kappa$ &$\ot{H}^1$ norm &
  iter & $\kappa$ &$\ot{H}^1$ norm & iter & $\kappa$ &$\ot{H}^1$ norm & iter &$\kappa$&$\ot{H}^1$ norm\\
  \hline\hline
  $\frac{1}{8}$&92&4.06e+2&1.39e-2&109&8.06e+2&1.51e-2&134&4.01e+3&1.65e-2&140&8.01e+3&1.66e-2\\
  \hline
  $\frac{1}{16}$&114&4.06e+2&1.04e-2&144&8.06e+2&1.25e-2&224&4.01e+3&1.57e-2&258&8.01e+3&1.64e-2\\
  \hline
  $\frac{1}{32}$&135&4.06e+2&5.75e-3&170&8.06e+2&8.06e-3&287&4.01e+3&1.31e-2&365&8.01e+3&1.47e-2\\
  \hline
  $\frac{1}{64}$&145&4.05e+2&2.19e-3&183&8.06e+2&3.69e-3&322&4.01e+3&8.78e-3&404&8.01e+3&1.12e-2\\
  \hline
  $\frac{1}{128}$&143&4.05e+2&6.42e-4&192&8.05e+2&1.21e-3&367&4.00e+3&4.32e-3&455&8.01e+3&6.36e-3\\
  \hline
\end{tabular}}
\end{center}
\caption{Condition numbers $\kappa(\Pi_G^{-1}\Pi_q^\dag A_q^{-1}(\Pi_q^\dag)^*(\Pi_G^{-1})^*A_{LE})$ and the discrete $\ot{H}^1$ errors between the exact solution and the numerical solution.}\label{table:challenge}
\end{table}

\section{Concluding remarks}

In this paper, using the auxiliary space technique we have designed a preconditioner for the solution of the problem of linear elasticity, which is also based on generalized finite element methods. We have proved that for arbitrarily fixed $\lambda$, the GFEM-based auxiliary preconditioner always works optimally for the system of linear elasticity discretized using the lowest finite element shape functions.

\bibliographystyle{plain}
\bibliography{GFEM0}
\end{document}